\theoremstyle{plain}
\newtheorem{theorem}{Theorem}[section]
\newtheorem{proposition}[theorem]{Proposition}
\newtheorem{lemma}[theorem]{Lemma}
\theoremstyle{definition}
\newtheorem{definition}[theorem]{Definition}
\newtheorem{remark}[theorem]{Remark}
\newtheorem{example}[theorem]{Example}
\newtheorem{assumption}[theorem]{Assumption}
\theoremstyle{remark}
\renewenvironment{thebibliography}[1]{%
\begin{oldthebibliography}{#1}%
\setlength{\baselineskip}{.9em}
\linespread{1}
\small
\setlength{\parskip}{0.3ex}%
\setlength{\itemsep}{.5em}%
}%
{%
\end{oldthebibliography}%
}
\newcommand{\eps}{\varepsilon}
\newcommand{\F}{\mathbb{F}}
\newcommand{\N}{\mathbb{N}}
\newcommand{\R}{\mathbb{R}}
\newcommand{\Z}{\mathbb{Z}}
\newcommand{\cB}{\mathcal{B}}
\newcommand{\cD}{\mathcal{D}}
\newcommand{\cF}{\mathcal{F}}
\newcommand{\cH}{\mathcal{H}}
\newcommand{\cK}{\mathcal{K}}
\newcommand{\cP}{\mathcal{P}}
\newcommand{\cT}{\mathcal{T}}
\newcommand{\1}{\mathbbm{1}}
\newcommand{\fP}{\mathfrak{P}}
\newcommand{\fM}{\mathfrak{M}}
\DeclareMathOperator{\dom}{dom}
\DeclareMathOperator{\NA}{NA}
\def\hypo{\mathrm{hypo}\,}
\numberwithin{equation}{section}
\begin{document}

\title{\vspace{-4em}
Nonconcave Robust Optimization with Discrete Strategies under Knightian Uncertainty 
\date{\today}
\author{
  Ariel Neufeld%
  \thanks{
  Division of Mathematical Sciences, Nanyang Technological University, Singapore, \texttt{ariel.neufeld@ntu.edu.sg}.
  Financial support by the NAP Grant as well as ETH RiskLab and the Swiss National Foundation Grant SNF 200020$\_$172815 is gratefully acknowledged.
  }
  \and
  Mario \v{S}iki\'c
  \thanks{
    Center for Finance and Insurance, University of Zurich, \texttt{mario.sikic@bf.uzh.ch}.
  }
 }
}
\maketitle \vspace{-1.2em}

\begin{abstract}
We study robust stochastic optimization problems in the quasi-sure setting in discrete-time. The strategies in the multi-period-case are restricted to those taking values in a discrete set. The optimization problems under consideration are not concave.  We provide conditions under which a maximizer exists. The class of problems covered by our robust optimization problem includes optimal stopping and semi-static trading under Knightian uncertainty. 
\end{abstract}

\vspace{.9em}

{\small
\noindent \emph{Keywords} Nonconcave Robust Optimization; Robust Utility Maximization 

\noindent \emph{AMS 2010 Subject Classification}
93E20; 49L20; 91B16 
}


\section{Introduction}\label{sec:intro}
We consider the following robust stochastic optimization problem
\begin{equation}\label{eq:robust-optim-Intro}
\sup_{H\in\cH}\,\inf_{P\in\fP} E^P\big[\Psi(H)\big],
\end{equation}
where $\cH$ is a set of stochastic processes and represent the control variable, e.g. the portfolio evolution of a trader; $\fP$ is a set of probability measures  modeling uncertainty. The goal of this paper is to establish existence of a maximizer $\widehat H$ for a class of maps $\Psi$.

Problem~\eqref{eq:robust-optim-Intro} is called a robust optimization problem, since it asks for the best possible performance in the worst possible situation, modeled by a probability measure $P\in\fP$. We will provide an example of the set $\fP$ in Section~\ref{sec:examples}; see also \cite{Bartl.16}.

The main focus of the paper is to analyze the situation when the map $\Psi$ is not necessarily concave. Concave functions enjoy a number of properties that make them easy to work with. One important property that makes them amenable to the techniques developed in robust finance, see e.g.~\cite{BouchardNutz.13, BertsekasShreve.78}, is that they are locally Lipschitz continuous if their domain has nonempty interior. This allows one to be able to evaluate the function value just by knowing it on a countable set of points, which is independent of the function under consideration. This property, that it is enough to know the function on a countable number of points, proved to be crucial in the analysis of the problem; see~\cite{NeufeldSikic.16}. Although this approximation result remains valid also for nonconcave functions, see e.g.~\cite{sikic-nonar}, in general it depends on the function. For that reason we impose this property on our maps $\Psi$ by assuming a sort of discreteness property. 

Although our formulation of the problem is quite general, we will provide examples showing that nonconcavity of market models arises naturally in finance. Examples include
semi-static trading with integer positions, optimal stopping and optimal liquidation problems, and a model of illiquidity.
 Our discreteness assumption is also natural in some problems in mathematical finance. 

We work in the robust setting introduced by Bouchard and Nutz~\cite{BouchardNutz.13}. This one is designed in order to be able to apply dynamic programming techniques to it; see~\cite{BertsekasShreve.78}. We will follow the formulation of dynamic programming in~\cite{Evstigneev.76}.

When considering the robust dynamic programming procedure, as outlined in the problem of robust utility maximization 
in~\cite{Nutz.13util},
measurability turns out to be the main obstacle to establishing positive results. A robust dynamic programming step consists in broad terms of (1) taking conditional expectations under various measures, i.e. taking infima of those, and (2) maximization. Measurability we are considering is lower-semianalyticity of the map $\Psi$. To show that the first step yields a lower-semianalytic function requires one to be able to approximate the function by a countable number of its function values. This does not even work for concave functions unless one assumes that the domain has a nonempty interior,
%
which is an assumption in~\cite{Nutz.13util}, 
but also explicitly assumed in~\cite{NeufeldSikic.16}. 
So, in the concave case, considered in~\cite{NeufeldSikic.16}, 
step~(1) of taking conditional expectations works, since the operation of taking a conditional expectation preserves concavity of functions, as does taking infima; step~(2) works since concavity is preserved also when maximizing the function in one parameter. The maximization step works even in the setup considered in this paper, given that we assume an appropriate no-arbitrage condition, however step~(1) fails, a minimum of conditional expectations does not need to be measurable. Whereas in the concave case one could infer measurability from concavity, here the conditional expectations have no properties one could exploit.

The no-arbitrage condition we assume is, stronger than the robust no-arbitrage condition, when considering the frictionless market model. Instead of considering concrete models of financial markets and the corresponding no-arbitrage conditions, we opted for quite an abstract approach and, admittedly, strong no-arbitrage condition. We, however, motivate our choices by discussing various examples and by providing intuition for the objects at hand.

Dynamic programming is an approach that replaces the multi-step decision problem with a series of one-step decision problems. If one can solve, i.e.\ prove existence of optimizers of the one-step problems, then one gets existence in the original problem by using those one-step optimizers. One approach in obtaining existence of the one-step problems, taken in the seminal paper \cite{RasonyiStettner.06} as well as in~\cite{Nutz.13util}, is to set up the problem in such a way that the set of strategies in the one-step problems one obtains is compact. Indeed, under a suitable no-arbitrage condition, having a frictionless market model with utility function defined on the positive half-line implies this compactness, up to the projection on the predictable range. We opt for the same approach. This compactness requirement, known as local-level boundedness locally uniformly in~\cite{RockafellarWets}, is imposed by assuming a recession condition on the function $\Psi$. We also refer to \cite{PennanenPerkkio.12}, where in a single-prior convex minimization setting a milder condition on the directions of recession is provided.

Robust utility maximization was already considered in the literature. The closest to our work in discrete-time is~\cite{NeufeldSikic.16}. There, concavity is assumed on the maps $\Psi(\omega,\cdot)$ for every $\omega\in\Omega$. For more results concerning the robust utility maximization problem in a nondominated framework, we refer to \cite{Nutz.13util,Bartl.16,BiaginiPinar.15,CarassusBlanchard.16,DenisKervarec.13,FouquePunWong.16,LinRiedel.14,MatoussiPossamaiZhou.12utility,NeufeldNutz.15,TevzadzeToronjadzeUzunashvili.13}. 
For examples of robust optimal stopping problem,
we refer to~\cite{BayraktarYao.11a,BayraktarYao.11b,NutzZhang.15,EkrenTouziZhang.12stop}. Furthermore, nonconcave utility maximization problems in the classical setup without model uncertainty were considered in~\cite{PennanenPerkkioRasonyi.17,CarassusRasonyi.16,CarassusRasonyiRodrigues.15,Reichlin.13,Reichlin.16}.
To the best of our knowledge, robust utility maximization in the nondominated setting for nonconcave financial markets has not been considered yet. 

The remainder of this paper is organized as follows. In Section~\ref{sec:Multi-Per}, we introduce the concepts, list the assumptions imposed on $\Psi$ and state the main results. Examples are provided in Section~\ref{sec:examples}. In Section~\ref{sec:1-Per}, we introduce and solve the corresponding one-period maximization problem. In Section~\ref{sec:proof-multi-period}, we introduce the notion needed in our dynamic programming approach and explain why this leads to the existence of a maximizer in our optimization problem \eqref{eq:robust-optim-Intro}. The proof is then divided into several steps, which heavily uses the theory of lower semianalytic functions.

\paragraph{\textbf{Notation.}}

For any vector $x\in\R^{dT}$, written out as $x=(x_0,\ldots,x_{T-1})$, where $x_i\in\R^d$ for each $i$, we denote the restriction to the first $t$ entries by $x^t:=(x_0,...,x_{t-1})$. For $y\in \R^{dT}$, we denote by $x\cdot y$ the usual scalar product on $\R^{dT}$. 


\section{Optimization Problem}\label{sec:Multi-Per}
Let $T\in \N$ denote the fixed finite time horizon and let $\Omega_1$ be a Polish space. Denote by $\Omega^t:=\Omega^t_1$ the $t$-fold Cartesian product for $t=0,1,\dots,T$, where we use the convention that $\Omega^0$ is a singleton. Let $\F=(\cF_t)_{t=0,1,\dots, T}$ where $\cF_t:=\bigcap_P \cB(\Omega^t)^P$ is the universal completion of the Borel $\sigma$-field $\cB(\Omega^t)$; here $\cB(\Omega^t)^P$ denotes the $P$-completion of $\cB(\Omega^t)$ and $P$ ranges over the set $\fM_1(\Omega^t)$ of all probability measures on $(\Omega^t,\cB(\Omega^t))$. Moreover, define $(\Omega,\cF):=(\Omega^T,\cF_T)$. This plays the role of our initial measurable space.

For every $t \in  \{0,1,\dots,T-1\}$ and $\omega^t \in \Omega^t$ we fix a nonempty 
set $\fP_t(\omega^t)\subseteq\fM_1(\Omega_1)$ of probability measures; $\fP_t(\omega^t)$ represents the possible laws
for the $t+1$-th period given state $\omega^t$.  Endowing $\fM_1(\Omega_1)$ with the usual topology induced by the weak convergence makes it into a Polish space; see  \cite[Chapter~7]{BertsekasShreve.78}. We assume that for each $t$
\begin{equation*}
\mbox{graph}(\fP_t):=\{(\omega^t,P)\,| \, \omega^t \in \Omega^t,\, P\in \fP_t(\omega^t)\}\ \ \mbox{ is an analytic subset of }\ \ \Omega^t \times \fM_1(\Omega_1) .
\end{equation*}
Recall that a subset of a Polish space is called analytic if it is the image of a Borel subset of a (possibly different) Polish space under a Borel-measurable mapping (see \cite[Chapter~7]{BertsekasShreve.78}); in particular, the above assumption is satisfied if  $\mbox{graph}(\fP_t)$ is Borel. The set $\mbox{graph}(\fP_t)$ being analytic provides the existence of an universally measurable kernel $P_t:\Omega^t\to \fM_1(\Omega_1)$ such that $P_t(\omega^t)\in \fP_t(\omega^t)$ for all $\omega^t \in \Omega^t$ by the Jankov-von Neumann theorem, see \cite[Proposition~7.49, p.182]{BertsekasShreve.78}. Given such a kernel $P_t$ for each $t\in \{0,1,\dots,T-1\}$, we can define a probability measure $P$ on $\Omega$ by 
\begin{equation*}
P(A):=\int_{\Omega_1}\dots \int_{\Omega_1} \mathbf{1}_A(\omega_1,\dots,\omega_T)\, P_{T-1}(\omega_1,\dots,\omega_{T-1};d\omega_T)\dots P_0(d\omega_1), \quad A \in \cF,
\end{equation*}
where we write $\omega:=\omega^T:=(\omega_1,\dots,\omega_T)$ for any element in $\Omega$. We denote a probability measure defined as above by  $P=P_0 \otimes \dots \otimes P_{T-1}$.  For the  multi-period market, we consider the set 
\begin{equation*}
\fP:=\{P_0\otimes\dots\otimes P_{T-1}\,|\,  P_t(\cdot) \in \fP_t(\cdot), \, t=0,\dots,T-1\}\subseteq \fM_1(\Omega),
\end{equation*}
of probability measures representing the uncertainty of the law,
where in the above definition each $P_t:\Omega^t\to \fM_1(\Omega_1)$ is universally measurable such that $P_t(\omega^t)\in \fP_t(\omega^t)$ for all $\omega^t \in \Omega^t$.

We will often interpret $(\Omega^t,\cF_t)$ as a subspace of $(\Omega,\cF)$ in the following way. 
Any set $A \subset \Omega^t$ can be extended to a subset of $\Omega^T$ by adding $(T-t)$ products of $\Omega_1$, i.e. $A^T:=A \times \Omega_1\times\dots \times\Omega_1\subset \Omega^T$. Then, for every measure $P=P_0\otimes\dots\otimes P_{T-1} \in \fP$,
one can associate a measure $P^t$ on $(\Omega^t,\cF^t)$ such that $P^t[A]=P[A^T]$ 
by setting $P^t:=P_0\otimes \dots \otimes P_{t-1}$. 

We call a set $A\subseteq \Omega\,$ $\fP$-polar if for all $P \in \fP$ there exists $A^P \in \cF$ such that $A\subseteq A^P$ and $P[A^P]=0$, 
and say a property to hold $\fP$-quasi surely, or simply $\fP$-q.s., if the property holds outside a $\fP$-polar set;
we will use $\fP$-q.a.\ $\omega$ (quasi all) and $\fP$-q.e.\ $\omega$ (quasi every) to say $\fP$-q.s.. 

A map $\Psi\colon\Omega\times\R^{dT}\rightarrow\overline{\R}$ is called an $\cF$-\emph{measurable normal integrand} if the correspondence $\hypo \Psi:\Omega\rightrightarrows\R^{dT}\times\R$ defined by
$$
  \hypo \Psi(\omega)=\big\{(x,y)\in\R^{dT}\times\R\,\big|\,\Psi(\omega,x)\geq y\big\}
$$ 
is closed-valued and $\cF$-measurable in the sense of set-valued maps, see \cite[Definition~14.1 and Definition~14.27]{RockafellarWets}.
\begin{remark}\label{rem:normal-differ}
	We point out that our definition of a normal integrand $\Psi$ varies from the classical one in  optimization as defined in, e.g., \cite[Chapter 14]{RockafellarWets} in the sense that our map $-\Psi$ satisfies the classical definition of a  normal intergrand. As we are looking for a maximum of a function instead of a minimum like in classical optimization problems, our definition of a normal function fits into our setting.
\end{remark}
 Note that the correspondence $\hypo \Psi$ has closed values if and only if the function $x\mapsto \Psi(\omega,x)$ is upper-semicontinuous for each $\omega$; see \cite[Theorem~1.6]{RockafellarWets}. By \cite[Corollary~14.34]{RockafellarWets}, 
 $\Psi$ is (jointly) measurable with respect to $\cF\otimes\cB(\R^{dT})$ and $\cB(\overline \R)$. Classical examples of normal integrands, which are most prevalent in mathematical finance, are Caratheodory maps; see  \cite[Example~14.29]{RockafellarWets}.

Denote by $\cH$ the set of all $\F$-adapted $\R^d$-valued processes $H:=(H_0,\dots,H_{T-1})$ with discrete-time index $t=0,\dots,T-1$. Our goal is to study the following optimization problem
\begin{equation}\label{eq:optim-prob-multiP}
\sup_{H \in \cH}\inf_{P \in \fP} E^P[\Psi(H_0,\dots,H_{T-1})],
\end{equation}
where $\Psi\colon\Omega\times \R^{d T} \to \overline{\R}$ is an $\cF$-measurable normal integrand.

Recall that a function $f$ from a Borel subset of a Polish space into $[-\infty,\infty]$ is called lower semianalytic if the set $\{f<c\}$ is analytic for all $c\in \R$; in particular any Borel function is lower semianalytic.  Moreover, recall that any analytic set is an element of the universal $\sigma$-field, see e.g. \cite[p.171]{BertsekasShreve.78}. A  function $f\colon\R^n\to \R \cup\{-\infty\}$ is called proper if $f(x)>-\infty$ for some $x \in \R^n$. The domain $\dom f$ of a function $f\colon\R^n\to \R \cup\{-\infty\}$ is defined by 
\begin{equation*}
\dom f\coloneqq\{x \in \R^n\, | \, f(x)>-\infty\}.
\end{equation*}
We refer to \cite{BertsekasShreve.78} and \cite{RockafellarWets} for more details about the different concepts of measurability, selection theorems and optimization theory.

We say that a set $\mathcal{D}\subseteq \R^{dT}$ 
satisfies the \textit{grid-condition} if 
\begin{equation}\label{eq:grid-condition}
\inf_{i\in \{0,\dots,T-1\}} \inf_{x,y\in \cD, x_i\neq y_i} |x_i-y_i|>0.
\end{equation}
The following conditions are in force throughout the paper.
\begin{assumption}\label{ass:Psi-MultiP}
The map $\Psi:\Omega\times \R^{d T} \to \R\cup\{-\infty\}$ satisfies the following:
\begin{enumerate}
\item[(1)] There exists $\mathcal{D}\subseteq \R^{dT}$  satisfying the grid-condition such that
\begin{equation*} 
\mbox{for all $\omega \in \Omega$, $\Psi(\omega,x)=-\infty$ for all $x\notin \cD$;} 
\end{equation*}
\item[(2)] there exists a constant $C\in\R$ such that $\Psi(\omega,x)\leq C$ for all $\omega\in \Omega$, $x \in \R^{dT}$;
\item[(3)] the map $(\omega,x) \mapsto \Psi(\omega,x)$ is lower semianalytic;
\item[(4)]  The zero process $0\in \cH$ satisfies 
$\inf_{P\in\fP} E^P[\Psi(0)]>-\infty$.
%
\end{enumerate}
\end{assumption}
\begin{remark}\label{rem:Psi-usc}
Due to Assumption~\ref{ass:Psi-MultiP}(1), by the grid-condition \eqref{eq:grid-condition} imposed on the set $\cD$,  the map $x \mapsto \Psi(\omega,x)$ is upper-semicontinuous for every $\omega \in \Omega$. In  fact, we prove in the key Lemma~\ref{le:normal} that $\Psi$ is a $\cF$-normal integrand.
\end{remark}
\begin{remark}\label{rem:0-in-D}
Assumption~\ref{ass:Psi-MultiP}(4) ensures that $0\in \mathcal{D}$.
\end{remark}
\begin{remark}\label{rem:bounded above}
At first glance, Assumption~\ref{ass:Psi-MultiP}(2) may seem to be rather restrictive. 
 It was shown in \cite[Example~2.3]{Nutz.13util} that for any (nondecreasing, strictly concave) utility function $U$ being \textit{unbounded from above}, one can construct a frictionless market $S$ and a set $\fP$ of probability measures such that
\begin{equation*}
u(x):=\sup_{H \in \cH}\inf_{P \in \fP} E^P[U(x+ H \bullet S_T)]<\infty
\end{equation*}
for any initial capital $x>0$,
but there is no maximizer $\widehat{H}^x$;  here we denoted by $H\bullet S_T$ the stochastic integral, i.e.\ $H\bullet S_T=\sum_{t=0}^{T-1} \langle H_{t},S_{t+1}-S_t\rangle$. So already in the special case 
$\Psi(H):=U(x+ H \bullet S_T)$,
 the existence may fail for utility functions not being bounded from above. 
 However, Assumption~\ref{ass:Psi-MultiP}(2) is sufficient in order to establish the existence of a maximizer for our optimization problem; see \cite{CarassusBlanchard.16}.
 \end{remark}
%
%
%
%

We now define the horizon function
$\Psi^\infty:\Omega\times\R^{dT}\rightarrow\R\cup\{-\infty\}$ of any function $\Psi:\Omega\times\R^{dT}\rightarrow\R\cup\{-\infty\}$ by
\begin{equation*}
  \Psi^\infty(\omega,x) 
  = \lim_{n\rightarrow\infty}\sup_{\substack{\delta>n,\\|x-y|<\frac1n}}\,\frac1\delta\Psi(\omega,\delta y)
  \end{equation*}
The mapping $\Psi^\infty(\omega,\cdot)$ is positively homogeneous and upper-semicontinuous, see~\cite[Theorem~3.21]{RockafellarWets}. If we assume $\Psi$ to be normal, then so is $\Psi^\infty$, see \cite[Exercise 14.54(a)]{RockafellarWets}.

Throughout the paper we impose the following  condition
\begin{equation}\label{eq:no-arbitrage-multiP}
  \cK:=\{H \in \cH \, | \, \Psi^\infty(H_0,\dots,H_{T-1})\geq 0 \ \fP\mbox{-q.s.}\}=\{0\}
\tag{\ensuremath{\NA(\fP)}}
\end{equation}
which means that for any $H \in \cH$ 
\begin{equation*}
H \in \cK \quad \Longleftrightarrow  \quad H = 0 \ \ \fP\mbox{-q.s.}
\end{equation*}
We call it the no-arbitrage condition. 
The main theorem of this paper is the following.
\begin{theorem}\label{thm:Maxim-exist-MultiP}
	Let $\Psi$ be a map satisfying Assumption~\ref{ass:Psi-MultiP}. If the no-arbitrage condition $NA(\fP)$ holds, then there exists a process $\widehat{H} \in \cH$ such that
	\begin{equation}\label{eq:thm-optimal-MultiP}
	\inf_{P \in \fP} E^P[\Psi(\widehat{H}_0,\dots,\widehat{H}_{T-1})]= \sup_{H \in \cH}\inf_{P \in \fP} E^P[\Psi(H_0,\dots,H_{T-1})].
	\end{equation}
\end{theorem}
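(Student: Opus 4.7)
The plan is to prove the theorem by backward induction, using a dynamic programming argument that reduces the multi-period problem \eqref{eq:optim-prob-multiP} to a sequence of one-period problems which are then solved using the results promised in Section~\ref{sec:1-Per}. Concretely, I would set $U_T:=\Psi$ and, for $t=T-1,T-2,\ldots,0$, define recursively
\begin{equation*}
U_t(\omega^t,x^t)\;:=\;\sup_{h\in\R^d}\inf_{P\in\fP_t(\omega^t)}\int_{\Omega_1} U_{t+1}\bigl((\omega^t,\omega_{t+1}),(x^t,h)\bigr)\,P(d\omega_{t+1}).
\end{equation*}
The goal is to show by downward induction that each $U_t$ is bounded above by $C$, lower semianalytic in $(\omega^t,x^t)$, a normal integrand in $x^t$ (so in particular upper-semicontinuous in $x^t$), vanishes outside an appropriate discrete set satisfying the grid-condition, and inherits an analogous no-arbitrage condition with respect to an associated horizon function $U_t^\infty$. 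Given these properties, at each step a universally measurable selector $\widehat h_t:\Omega^t\times\R^{dt}\to\R^d$ attaining the supremum exists; the process $\widehat H_t:=\widehat h_t(\cdot,\widehat H_0,\ldots,\widehat H_{t-1})$ is then the desired optimizer.

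The main obstacle, as already emphasised in the introduction, is the measurability of $U_t$: taking the infimum of conditional expectations over the uncountable family $\fP_t(\omega^t)$ does not a priori preserve lower semianalyticity, and no convexity is available to remedy this. The discreteness Assumption~\ref{ass:Psi-MultiP}(1), together with the grid-condition \eqref{eq:grid-condition}, is precisely what I would exploit here: it forces $U_{t+1}((\omega^t,\cdot),(x^t,h))$ to be determined, for each fixed $(x^t,h)$, by countably many pointwise values taken on a fixed discrete grid independent of $\omega^t$. This reduces the problematic infimum to an operation on countably many functions of the form $(\omega^t,P)\mapsto\int f\,dP$ with $f$ lower semianalytic, which remain lower semianalytic by \cite[Proposition~7.48]{BertsekasShreve.78}. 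Projecting out $P$ (again producing a lower semianalytic function, as infimum over a $P$-section of an analytic graph) and then applying \cite[Proposition~7.50]{BertsekasShreve.78} to the parametric supremum over $h$ yields lower semianalyticity of $U_t$. Preservation of upper-semicontinuity in $x^t$ and of the discrete support follows directly from the structure of the grid together with the fact that infima and countable suprema preserve upper-semicontinuity in the relevant direction.

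For the one-period maximization step, I would invoke the result of Section~\ref{sec:1-Per}. The role of the no-arbitrage condition \eqref{eq:no-arbitrage-multiP}, via the horizon function $\Psi^\infty$, is to provide local-level boundedness locally uniformly in the sense of \cite{RockafellarWets}: any sequence of strategies whose objective does not diverge to $-\infty$ must remain bounded, up to the quasi-sure projection onto the effective range. Combined with upper-semicontinuity in $h$ (from Remark~\ref{rem:Psi-usc} applied inductively to $U_{t+1}$), this yields existence of a maximizer in the one-period problem. The Jankov--von Neumann selection theorem \cite[Proposition~7.49]{BertsekasShreve.78} then provides the universally measurable selector $\widehat h_t$.

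Finally, I would assemble the one-step selectors into a strategy $\widehat H\in\cH$ and verify the Bellman identity
\begin{equation*}
U_0\;=\;\sup_{H\in\cH}\inf_{P\in\fP}E^P\bigl[\Psi(H_0,\ldots,H_{T-1})\bigr]\;=\;\inf_{P\in\fP}E^P\bigl[\Psi(\widehat H_0,\ldots,\widehat H_{T-1})\bigr],
\end{equation*}
by iteratively applying Fubini for universally measurable kernels to the decomposition $P=P_0\otimes\cdots\otimes P_{T-1}\in\fP$, exchanging the order of essential infimum over kernels and conditional expectation at each stage. Assumption~\ref{ass:Psi-MultiP}(2) (upper boundedness) and Assumption~\ref{ass:Psi-MultiP}(4) ($\inf_PE^P[\Psi(0)]>-\infty$) ensure all integrals are well-defined, so no integrability issues obstruct the induction. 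The hardest part, by a clear margin, will be executing the measurability step rigorously; the existence and selection portions should be routine once the machinery of \cite{BertsekasShreve.78} is in place.
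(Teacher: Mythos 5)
Your overall plan coincides with the paper's: the same backward recursion (the paper merely splits your $U_t$ into $\Phi_t$ followed by $\Psi_t$), the same use of the grid-condition to rewrite the supremum over $h$ as a countable supremum over grid points and thereby preserve lower semianalyticity, the same one-period existence result driven by the horizon function, and the same Fubini-based verification of the Bellman identity via $\varepsilon$-optimal kernels. On all of these points your outline is sound and matches the paper.

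There is, however, one genuine gap: the clause ``inherits an analogous no-arbitrage condition with respect to an associated horizon function $U_t^\infty$'' is asserted without any argument, and this is precisely the step the paper identifies as its key technical difficulty --- not the measurability of $U_t$, which you single out as the hardest part. Two separate things must be proved. First, $\NA(\fP)$ is a statement about adapted processes on all of $\Omega$, whereas what the one-period result (Proposition~\ref{prop:key-argument-RW}) needs at stage $t$ is the \emph{pointwise} condition $K_t(\omega^t)=\{h\in\R^d \mid \Phi_t^\infty(\omega^t,0,\dots,0,h)\geq0\}=\{0\}$ for $\fP$-quasi-every $\omega^t$; passing from the global to this local statement requires showing that $\omega^t\rightrightarrows K_t(\omega^t)$ is a closed-valued measurable correspondence (via the normal-integrand property of $\Phi_t^\infty$, Lemma~\ref{le:normal}) and extracting a Castaing representation so that a nonzero measurable selection on the exceptional set can be lifted to a nonzero element of $\cK^{t+1}$ (Lemma~\ref{le:K-t-local-nice} and Proposition~\ref{prop:local-NA}). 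Second, the global condition must propagate backwards: $\cK^{t+1}=\{0\}$ must imply $\cK^{t}=\{0\}$, and the proof (Proposition~\ref{prop:no-arbitrage-up-time-t-MultiP}) takes a hypothetical nonzero $\widetilde H^t$ with $\Psi_t^\infty(\widetilde H^t)\geq0$, produces a measurable maximizer $\widetilde h_t$ of $\Phi_t^\infty(\widetilde H^t,\cdot)$ --- which itself uses Proposition~\ref{prop:key-argument-RW} applied to $\Phi_t^\infty$ together with the local condition just established --- and then uses $\Phi_t^\infty(\omega^t,\cdot)\leq\inf_{P\in\fP_t(\omega^t)}E^P[\Psi_{t+1}^\infty(\omega^t\otimes_t\cdot,\,\cdot\,)]\leq0$ to conclude $(\widetilde H^t,\widetilde h_t)\in\cK^{t+1}$, a contradiction. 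Neither step is routine, and without them the inductive hypothesis you invoke at every stage is unsupported. (A minor point: the argmax selector is obtained in the paper from the normal-integrand machinery of \cite[Theorem~14.37]{RockafellarWets} rather than from Jankov--von Neumann, which is used only for the $\varepsilon$-optimal kernels.)
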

We will give the proof of this theorem in Section~\ref{sec:proof-multi-period}.
\begin{remark}\label{rem:why-name-NA}
Let us argue why we call the condition $\cK=\{0\}$ the no-arbitrage condition. Consider first the frictionless market model given by $\Psi(H)=H\bullet S_T$. In this case, the condition $\cK=\{0\}$ is written as
$$
H\bullet S_T\geq0\ \ \fP\mbox{-q.s.} \quad\Longrightarrow\quad \,\,H=0\ \ \fP\mbox{-q.s.}
$$
This condition is strictly stronger than the robust no-arbitrage condition, which one states as: $H\bullet S_T\geq0$ $\fP$-q.s. implies $H\bullet S_T=0$ $\fP$-q.s. In the case where $\fP=\{P\}$, i.e. in the dominated setup, the requirement that $H\bullet S_T=0$ implies $H=0$ is referred to as `there are no redundant assets in the market'. Thus, our no-arbitrage condition in the frictionless market requires the classical no-arbitrage condition and, additionally, that there are no redundant assets. Also, in the general nondominated setup, the condition $\cK=\{0\}$ is in general strictly stronger than the robust no-arbitrage condition when restricted to the frictionless market model.

In market models with friction, the no-arbitrage condition of the form
$$
\Psi(H)\geq0\ \ \fP\mbox{-q.s.} \quad\Longrightarrow\quad \ \ \Psi(H)=0\ \ \fP\mbox{-q.s.}
$$
proved to be insufficient for establishing the fundamental theorem of asset pricing and duality, even when talking about convex transaction costs. Thus, even in the market model with proportional transaction costs one talks about weak no-arbitrage condition, strict no-arbitrage condition and robust no-arbitrage condition; see~\cite{Kabanov2003}. It is only the last concept that proved to be strong enough to imply the fundamental theorem of asset pricing. Note that robust no-arbitrage condition is the (weak) no-arbitrage condition with an additional condition.

Moving further in the direction of more general transaction costs one realizes that it is natural for markets to allow for arbitrage opportunities; this is, e.g., one of the conclusions of the paper~\cite{mete2013utility}.
If that is the case, one says that the market model satisfies the no-arbitrage condition if there are no arbitrage opportunities with `large positions in the market'. Stated differently, it is requested that for every strategy $H$ which is an arbitrage strategy, i.e. $\Psi(H)\geq0$, there exists an $n_0\geq0$, such that $n H$ is not an arbitrage strategy whenever $n\geq n_0$. This is where the horizon function comes into the story. 

The horizon function is, by definition, positively homogeneous. It is, thus, enough to understand no-arbitrage theory for such maps. The reason why the horizon function is useful is that it provides a convenient upper bound for the map $\Psi$. To understand what is meant by that statement, consider, first, a trivial example of a function $f\colon\R\rightarrow\R$ which is upper-semicontinuous and satisfies $f^\infty(x)<0$ whenever $x\not=0$ and $f(0)=f^\infty(0)=0$; this is a market model satisfying our no-arbitrage condition. Since the function $f^\infty$ is positively homogeneous, there exists a continuous positively homogeneous function $g\colon\R\rightarrow\R$ such that $f^\infty(x)<g(x)<0$ for all $x\not=0$. Furthermore, there exists an $a\in\R$, such that $f(x)\leq a+g(x)$ for all $x\in\R$; indeed, we note first that, by Proposition~3.23 in~\cite{RockafellarWets},
$$
  \{x|(f-g)(x)\geq\alpha\}^\infty
  \subseteq
  \{x|(f-g)^\infty(x)\geq0\}
  \subseteq
  \{0\};
$$
the second follows from the assumption that $(f-g)^\infty(x)<0$ on $x\not=0$. We denoted by $A^\infty$, where $A\subset\R^n$, the set $\{x|(\chi_{A})^\infty(x)=0\}$, where $\chi_A(x) = 0$ if $x\in A$ and $\infty$ otherwise. Since the set on the left hand side is non-empty for an appropriately chosen $\alpha$, all inclusions are, in fact, equalities. By Theorem~3.5 in~\cite{RockafellarWets}, we get that $\{x|(f-g)(x)\geq\alpha\}$ is a bounded, hence compact, set. The function $(f-g)$ is upper-semicontinous on a compact set, hence bounded above; outside the compact set it is bounded by assumption. In general, the connection between level boundedness and horizon functions is explained in Theorem~3.31 in~\cite{RockafellarWets}. Although the statements in this simple example are obvious, the thinking extends to the situation considered in this paper: it is the key technical step of the paper to show that the no-arbitrage condition we consider, $\cK=\{0\}$ translates to the local one, defined in the following pages.

To conclude, the no-arbitrage condition we are proposing, $\cK=\{0\}$, is implied by the following, more natural, condition: There exists a map $\Upsilon\colon\Omega\times\R^{dT}\rightarrow\overline\R$, such that $\Upsilon(\omega,\cdot)$ is positively homogeneous, continuous, $\Upsilon(\omega,x)>\Psi^\infty(\omega,x)$ $\fP$-q.s.\ and the map $\Upsilon$ satisfies the following condition
$$
\Upsilon(H)\geq0\ \ \fP\mbox{-q.s.}
\quad\Longrightarrow\quad 
\Upsilon(H)=0\ \ \fP\mbox{-q.s.},
$$
i.e. the map $\Upsilon$ satisfies a weak form of no-arbitrage condition. One says that the map $\Upsilon$ dominates $\Psi$. This condition is reminiscent of the robust no-arbitrage condition in markets with proportional transaction costs with efficient friction. The condition $\cK=\{0\}$ is also implied by the following: there exists a frictionless market model $(S_t)$ satisfying our no-arbitrage condition, i.e. which is arbitrage free and there are no redundant assets, and a random variable $a$ such that $\Psi(H)\leq a+H\bullet S_T$ for all $\omega$, $x$. Thus, the reason for choosing such an abstract condition to serve as a no-arbitrage condition is merely because it puts various situations under the same roof.	
\end{remark}
\begin{remark}
  It remains to mention why the no-arbitrage is a $\fP$-q.s.\ condition. The idea behind is simple: Consider the frictionless market model. If we do not know the probability distributions of the increments of the stock price process precisely, we consider the worst case. For a given strategy, the worst case would be the infimum of the value at final time over all the measures. However, due to measurability issues, this is not a well-defined object. So, the market model could admit arbitrage for every element $P\in\fP$, but still be arbitrage free; see Example~\ref{ex:NA stuff}.
\end{remark}

%
%
\section{Examples}\label{sec:examples}

\subsection{Integer Valued Strategies and Semi-Static Trading}
Consider $S=(S^1,\ldots,S^d)$ a $d$-dimensional stock price process whose components are Borel measurable for each $t$. Consider $U\colon\Omega\times\R\rightarrow\R\cup\{-\infty\}$ a random (not necessarily concave) utility function; precisely, $U(\omega,\cdot)$ is a continuous function which is bounded above by some constant $C$ for each $\omega\in\Omega$. Furthermore, assume that $(\omega,x)\mapsto U(\omega,x)$ is lower-semianalytic. We also assume that the trader is only allowed to have integer positions in the risky assed, i.e. $h_t(\omega)\in\Z^d$ for all $\omega$, $t$. We define a mapping
$$
\Psi(h)\coloneqq U\bigg(x + \sum_{t=0}^{T-1}\langle h_t, S_{t+1}-S_t\rangle\bigg) - \chi_{\Z^{dT}}(h), 
$$
where $x\in\R$ is the fixed initial wealth of the trader and $\chi_A(x)$ is the  function taking value $0$ if $x\in A$ and $\infty$ otherwise. Let us show that $\Psi$ satisfies Assumption~\ref{ass:Psi-MultiP}. 
%
First, the grid-condition is clear.
Also lower-semianalyticity is clear, as it is a composition of a lower-semianalytic function and a Borel one. Boundendess from above follows by the same assumption on $U$. 
Finally, if $\inf_{P\in\fP}E^P[U(x)]>-\infty$, then also the last condition holds and hence $\Psi$ indeed satisfies Assumption~\ref{ass:Psi-MultiP}.
 

\medskip
We may also consider the utility maximization with semi-static portfolios where beside a position $h$ in stock one can also hold a position in static assets $\{f_i\,|\,i=1,\ldots,I\}$ available for free in the market. The value of a strategy $(h,g)$ is given by
$$
V(h,g) = x + \sum_{t=0}^{T-1}\langle h_t, S_{t+1}-S_t\rangle + \sum_{i=1}^{I}g_if_i.
$$
In a similar way as above one shows that 
$$
\Psi(h,g)\coloneqq U\bigg(x + \sum_{t=0}^{T-1}\langle h_t, S_{t+1}-S_t\rangle + \sum_{i=1}^{I}g_if_i\bigg) - \chi_{\Z^{dT}}(h) - \chi_{\Z^I}(g)
$$
satisfies Assumption~\ref{ass:Psi-MultiP}.

\subsection{The Optimal Stopping Problem}
Given a process $G\coloneqq(G_t)_{t=0,\ldots,T}$, with $G_t$ an $\cF_t$-measurable random variable for each $t$, we consider an optimal stopping problem. Denote by $\cT$ the set of $\F$-stopping times; the problem is then
$$
\inf_{P\in\fP} E^P[G_\tau]\longrightarrow\max\qquad\text{over }\tau\in\cT.
$$
To transform it into a standard form, note that $\tau$ is a stopping time if and only if $\{\tau=t\}\in\cF_t$ for all $t$, i.e. if the process $\1_{[\!\![ 0,\tau)\!\!)}$ is adapted. Equivalently, every adapted $\{0,1\}$-valued decreasing process $h$ defines a stopping time $\tau\coloneqq\inf\{t\,|\,h_t=0\}.$ We define the following mapping
$$
\Psi(h)\coloneqq\sum_{t=0}^T(h_{t-1}-h_t)\,G_t - \chi_D(h),
$$
where $D\subset \Z^{T+2}$ is defined as follows
$$
D = \big\{x=(x_{-1},\ldots,x_T)\,\big|\,x_{t-1}\geq x_t,\ x_t\in\{0,1\}\ \ \forall t, \ x_{-1}=1, \ x_{T}=0\big\}.
$$
The set $D$ is, clearly, closed; consequently, the map $\chi_D$ is lower semicontinuous, hence Borel. Hence the map $(\omega,x)\mapsto\Psi(\omega,x)$ is lower semianalytic as soon as $G_t$ are.
 The domain of the function $\Psi(\omega)$ is finite; this implies also that $\Psi$ satisfies the grid-condition. There exists an upper bound $C$ as soon as $G_t\leq C$ $\fP$-q.s.. Finally, if  $\inf_{P\in\fP}E^P[G_0]>-\infty$, then also the last condition holds.

One could also model optimal liquidation problems, where a trader starts with a large integer position $h_{-1}=M \in \N$ and needs to liquidate it by the final time, i.e. $h_T=0$; see e.g. \cite{AlmgrenChriss.00}.

%

%

\subsection{Roch--Soner Model of Illiquidity}
Here we give a more involved example of a limit order book. More information about modeling considerations can be found in~\cite{roch2013resilient}; see also \cite{mete2013utility} for a discrete-time version of the model. The `equilibrium stock price' process $S\coloneqq(S_t)$ represents the price when there is no trading; with trading, after the trading period, the price is given by $S_t+\ell_t$; $\ell_t$ represents impact of trading. If a big trader executes the trade $\Delta h_t$ at time $t$, he or she moves the price, so the price after trade changes by $m_t \Delta h_t$. The model is the following
\begin{align*}
\ell_{t+1} &= (1-\kappa)\ell_t + 2m_{t+1}\Delta h_{t+1}\\[2pt]
V_{t+1}    &= V_t + h_t\Delta S_{t+1} - \kappa\ell_th_t-\Delta m_{t+1}h_t^2 \\[2pt]
V_0 &= \ell_0 = 0
\end{align*}
for some constant $\kappa\in(0,1)$, capturing the decay of price impact $\ell$, and a strictly positive process $(m_t)$, encoding the `depth' of the limit order book of $1/2m_t$. 

Let $U\colon\Omega\times\R\rightarrow\R\cup\{-\infty\}$ be a random utility function. Set
$$
\widehat\Psi(h)\coloneqq U(V_T).
$$
One can show that the mapping $h\mapsto V_T$ is not concave. Next, we restrict trading to  integer values, i.e. define $\Psi(h) = \widehat\Psi(h)-\chi_{\Z^T}(h)$. Let us now check the conditions. 
It is easy to see that the grid condition is satisfied for the map $\Psi$. Similarly for the existence of the upper bound: it holds as soon as it holds for $U(\omega,\cdot)$ for all $\omega\in\Omega$. The map $(\omega,x)\mapsto\Psi(\omega,x)$ will be lower semianalytic if $S_t$ and $m_t$ are Borel measurable random variables and the map $(\omega,x)\mapsto U(\omega,x)$ is lower semianalytic. The last condition holds as soon as $\inf_{P\in\fP}E^P[U(0)]>-\infty$.

%
%
%
\subsection{An Example of a Set $\fP$}
\begin{example}
\label{ex:NA stuff}
Consider a one-step frictionless market model on the measurable space $\Omega=[0,\frac13]\cup[\frac23,1]$ with the sigma-algebra $\cF_0=\{\varnothing,\Omega\}$. Consider a stock price $S$, given by $S_0=1$ identically, and $S_1(\omega)=2\omega$. Consider the family 
$$
  \fP_0 = \{\delta_x\,|\,x\in\Omega\},
$$
where by $\delta_x$ we denoted a Dirac measure with all the mass in $x$. The set $\Omega$ is, clearly, a metric space as a subset of $\R$ and the random variable $S_1$ is Borel-measurable. Let $h\in\R$ be a strategy; assume that $h\not=0$. If $h>0$ and $P=\delta_x$, with $x>\frac12$, the strategy $h$ is an arbitrage strategy
$$
  P[h(S_1-S_0)] = h(2x-1)>0.
$$
However, if $x<\frac12$, $h$ is not an arbitrage strategy. One can repeat the analysis for $h<0$. What this shows is that the market model may admit arbitrage for every $P\in\fP_0$, but still be arbitrage free. One can see that this, classical definition of no arbitrage is equivalent to the one we are considering; see Remark~\ref{rem:why-name-NA}.

Let us show that the set of probability measures $\fP_0$ has analytic graph
$$
  \mbox{graph}(\fP_0)=\{(\omega^0,P)\,|\,\omega^0\in\Omega^0,\ P\in\fP_0(\omega^0)\}.
$$
Given that $\Omega^0$ is a singleton and the set $\fP_0$ clearly closed, it is also analytic.
\end{example}
A similar analysis as in the previous example could be repeated for other examples. Let us, thus, only concentrate on the sets of measures in $\fP$.

In fact, we only need to focus on the single step correspondences $\omega^t\mapsto\fP_t(\omega^t)$ with analytic graph. An example of those would be correspondences with Borel, closed or open graph, like upper and lower hemicontinuous correspondences.

Another way of inducing the set of probability measures $\fP_t$ is to `guess' the true transition function $\omega^t\mapsto P_t(\omega^t)$ and define $\fP_t(\omega^t)$ to be its neighbourhood, using, e.g. a continuous function $F\colon\Omega^t\times\cP(\Omega_1)\times\cP(\Omega_1)\rightarrow[0,\infty)$ as follows
\[
  \fP_t(\omega^t)\coloneqq\big\{P\in\cP(\Omega_1)\big|F\big(\omega^t,P,P_t(\omega^t)\big)\leq\varepsilon\big\}
\]
for some $\varepsilon>0$.
Examples of the function $F$ include the various distances between measure spaces, e.g. $F(\omega^t,P,Q) =\|P-Q\|_{TV}$.
\begin{example}
Let us sketch another procedure of specifying the set of measures $\fP$ on an example: The $\fP$ determines a parametric family of probability measures with unknown parameter. Consider a frictionless market model in one step with $\Omega=[0,1]$. Assume that the stock price is given by $S_0=1$ and $S_1(\omega)=2\omega$. We want to specify the set $\fP$ such that under each measure $P\in\fP$ the stock price is a binomial tree model where the initial stock price process $S_0=1$; the stock price goes up with probability $p\in [0.3,0.7]$ and takes value in $[1.4,1.6]$, or the stock price goes down and takes value in $[0.4,0.6]$. This can be modeled as follows: Set $\Omega\coloneqq[0.4,0.6]\cup[1.4,1.6]$, $S_1(\omega)=\omega$ and define 
$$
\fP\coloneqq\big\{p\delta_u + (1-p)\delta_d\,\big|\,d\in[0.4,0.6],\ u\in[1.4,1.6],\ p\in[0.3,0.7]\big\};
$$
where by $\delta_x$ we denoted a Dirac measure with all the mass in $x$. Under every measure $P\in\fP$ the market model $S$ is arbitrage free under $P$; indeed, the no-arbitrage condition in Remark~\ref{rem:why-name-NA} is trivially true in this example.

To conclude, we need to show that $\mbox{graph}(\fP_0)$ is analytic. But it is clear that it is sequentially closed; since $\fP$ is a subset of a compact metric space, it is also closed, hence also analytic. 
\end{example}

Section~2.3 in~\cite{Bartl.16} contains a more elaborate discussion on construction of the set $\fP$.
%

\section{One-Period-Model}\label{sec:1-Per}
\subsection{Setup}\label{subsec:1-Per-setup}
Let $(\Omega,\cF)$ be a measurable space
and $\fP$ be a  possibly nondominated set of probability measures on it. Fix a function $\Psi \colon \Omega\times \R^{d} \to \R\cup\{-\infty\}$ and consider the optimization problem
\begin{align}\label{eq:optimization-problem-1-Per}
\sup_{h\in\cH} \inf_{P\in\fP} E^P\big[\Psi(h)\big];
\end{align}
the set of strategies is here just $\cH=\R^d$.


\begin{assumption}\label{ass:1-Per}
The map $\Psi\colon\Omega\times\R^d\rightarrow\R\cup\{-\infty\}$ satisfies 
\begin{enumerate}
\item[(1)] The map $x\mapsto\Psi(\omega,x)$ is upper-semicontinuous for all $\omega \in \Omega$ and\\ \, the map 
$(\omega,x)\mapsto\Psi(\omega,x)$ is $\cF\otimes \cB(\R^d)$-measurable.

%
%
%
\item[(2)] there exists a constant $C\in\R$ such that $\Psi(\omega,x)\leq C$ for all $\omega\in \Omega$, $x \in \R^d$;
\item[(3)]  We have that $\inf_{P\in\fP} E^P[\Psi(0)]>-\infty$.

\end{enumerate}
\end{assumption}
\begin{remark}\label{rem:meas-1-per}
Note that Assumption~\ref{ass:1-Per}(1) is much weaker than Assumption~\ref{ass:Psi-MultiP}(1) which involves a set $\cD\subseteq \R^{dT}$ satisfying the grid-condition; see also 
Lemma~\ref{le:normal}. 
Moreover,  in the one-period model we did not require any structural properties on the measurable space $(\Omega,\cF)$ nor on the set $\fP$ of probability measures. In particular, we did not assume that the map $(\omega,x)\mapsto \Psi(\omega,x)$ is lower semianalytic.
The stronger Assumption~\ref{ass:Psi-MultiP} is necessary in the multi-period model for the purpose of ensuring  lower semianalyticity of maps appearing in a dynamic programming procedure, so that measurable section results can be applied; see (3) in the proof of Proposition~\ref{Psi-DPP-Ass-Multi-P-OK}.
\end{remark}

We work under the following no-arbitrage condition 
\begin{align}\label{eq:def-NA-1-Per}
 \cK := \big\{h\in\cH\,\big|\,\Psi^\infty(h)\geq 0\quad\fP\textrm{-q.s.}\big\}=\{0\}.
\end{align}
\begin{theorem}\label{thm:1-Per}
  Let the no-arbitrage condition \eqref{eq:def-NA-1-Per} and Assumptions~\ref{ass:1-Per} hold. Then there exists a strategy $\widehat{h} \in \R^d$ such that
\begin{equation}\label{eq:thm:1-Per}
 \inf_{P\in\fP} E_P\big[\Psi(\widehat h)\big]= \sup_{h\in\R^d}\inf_{P\in\fP} E_P\big[\Psi(h)\big].
\end{equation}
\end{theorem}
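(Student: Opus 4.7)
The plan is to realize the maximizer as a maximizer of the deterministic function $\varphi\colon\R^d\to[-\infty,C]$ defined by $\varphi(h):=\inf_{P\in\fP}E^P[\Psi(h)]$ by a Weierstrass-type argument: I will show that $\varphi$ is upper semicontinuous and that its upper level sets are bounded. Since Assumption~\ref{ass:1-Per}(3) gives $\varphi(0)>-\infty$, the set $\{h\in\R^d:\varphi(h)\geq\varphi(0)\}$ is non-empty, and together with USC and level boundedness it is compact, so the USC function $\varphi$ attains its supremum there, which is its global maximum.

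Upper semicontinuity is the easy step. If $h_n\to h$, upper semicontinuity of $\Psi(\omega,\cdot)$ gives $\limsup_n\Psi(\omega,h_n)\leq\Psi(\omega,h)$ pointwise, and the uniform bound $\Psi\leq C$ makes reverse Fatou applicable under each $P\in\fP$, yielding $\limsup_n E^P[\Psi(h_n)]\leq E^P[\Psi(h)]$. Using $\varphi(h_n)\leq E^P[\Psi(h_n)]$, taking $\limsup$ on the left and $\inf$ over $P$ on the right gives $\limsup_n\varphi(h_n)\leq\varphi(h)$.

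The hard part is level-boundedness, which is where the no-arbitrage hypothesis is invoked. Suppose for contradiction that $(h_n)\subseteq\R^d$ satisfies $|h_n|\to\infty$ and $\varphi(h_n)\geq c$ for some $c\in\R$. By compactness of the unit sphere, one may assume $\eta_n:=h_n/|h_n|\to\eta$ with $|\eta|=1$. Reading off the definition of $\Psi^\infty$ along $\delta=|h_n|\to\infty$ and $y=\eta_n\to\eta$, one has pointwise
\begin{equation*}
\limsup_n\frac{\Psi(\omega,h_n)}{|h_n|}\leq\Psi^\infty(\omega,\eta).
\end{equation*}
Since $\Psi(h_n)/|h_n|\leq C/|h_n|$ is dominated by a constant, reverse Fatou under each $P\in\fP$ gives $\limsup_n E^P[\Psi(h_n)/|h_n|]\leq E^P[\Psi^\infty(\eta)]$, while the left side dominates $c/|h_n|\to 0$, so $E^P[\Psi^\infty(\eta)]\geq 0$. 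The crucial observation is that the same bound $\Psi\leq C$ forces $\Psi^\infty\leq 0$ pointwise directly from the definition of the horizon function; combined with $E^P[\Psi^\infty(\eta)]\geq 0$ this upgrades to $\Psi^\infty(\eta)=0$ $P$-almost surely for every $P\in\fP$. Measurability of the event $\{\Psi^\infty(\eta)<0\}$, inherited from $\Psi^\infty$ being a normal integrand via Exercise~14.54(a) of~\cite{RockafellarWets}, then improves this to $\Psi^\infty(\eta)\geq 0$ $\fP$-q.s., contradicting~\eqref{eq:def-NA-1-Per} since $|\eta|=1\neq 0$. The main subtlety in this step is the interplay between the $P$-pointwise horizon-function bound, the reverse Fatou through the infimum over $\fP$, and the fact that the upper bound $C$ makes $\Psi^\infty$ nonpositive, so that the $\fP$-q.s.\ inequality extracted from the NA condition is sharp enough to force $\eta=0$.
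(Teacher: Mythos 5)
Your argument is correct and is essentially the paper's proof: you work with the same function $\Phi(h)=\inf_{P\in\fP}E^P[\Psi(h)]$, establish its upper semicontinuity the same way, and your level-boundedness step is exactly the content of Lemma~\ref{le:psi-phi-infty} combined with Proposition~\ref{prop:key-argument-RW} (Rockafellar--Wets, Theorems~3.31 and~1.17) --- your estimate $E^P[\Psi^\infty(\eta)]\geq 0$ for a normalized recession direction $\eta$ of an unbounded level set, obtained via reverse Fatou and the uniform bound $\Psi\leq C$, is the same computation as the paper's inequality $\Phi^\infty(h)\leq\inf_{P\in\fP}E^P[\Psi^\infty(h)]\leq 0$. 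The only difference is presentational: you unpack the horizon-function-to-level-boundedness implication by hand on the unit sphere rather than citing it.
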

The proof of Theorem~\ref{thm:1-Per} will be provided in the following  Subsection~\ref{subsec:proof-1-Per}.
%
%

%
%
\subsection{Proof of the One-Period Optimization Problem}\label{subsec:proof-1-Per}
The key result from \cite{RockafellarWets} which we use to prove the one-period optimization problem is the following:
\begin{proposition}\label{prop:key-argument-RW}
 Let $f:\R^m\times\R^n\to \R\cup\{-\infty\}$ be proper, upper-semicontinuous with 
 here $\cK:=\{x\in\R^n\,|\,f^\infty(0,x)\geq0\}=\{0\}$. Then the function
 $$
   p(u) = \sup_{x\in\R^n}f(u,x)
 $$ 
 is proper, upper-semicontinuous, and for each $u\in dom(p)$ there exists a maximizer $x(u)$. Moreover, we  have $p^\infty(u) = \sup_{x\in\R^n}f^\infty(u,x)$, which is attained whenever $u\in dom(p^\infty)$.
 \end{proposition}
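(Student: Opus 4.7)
The plan is to derive the proposition from the level-boundedness machinery of Chapter~1 and Chapter~3 of~\cite{RockafellarWets}, applied to the sup- rather than inf-problem. The key structural property to verify is that $f$ is \emph{level-bounded in $x$ locally uniformly in $u$}: for every $\bar u\in\R^m$ and every $\alpha\in\R$, there exist a neighbourhood $V$ of $\bar u$ and a bounded set $B\subseteq\R^n$ such that $\{x\mid f(u,x)\geq\alpha\}\subseteq B$ for all $u\in V$. Once this is in hand, properness, upper-semicontinuity and attainment of $p$, as well as the horizon-function identity $p^\infty(\cdot)=\sup_x f^\infty(\cdot,x)$, are exactly the conclusions of Theorems~1.17 and~3.31 of~\cite{RockafellarWets} (stated there for lower-semicontinuous inf-problems; apply them to $-f$).

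The main obstacle, and the only place where the no-arbitrage condition enters, is the verification of this uniform level-boundedness. I would argue by contradiction. If it fails, there exist $\bar u\in\R^m$, $\alpha\in\R$ and sequences $u_k\to\bar u$ and $x_k\in\R^n$ with $\delta_k:=|x_k|\to\infty$ and $f(u_k,x_k)\geq\alpha$. Set $y_k:=x_k/\delta_k$ and $u'_k:=u_k/\delta_k$, so that $u'_k\to 0$, and pass to a subsequence along which $y_k\to\bar y$ with $|\bar y|=1$. Then $(u'_k,y_k)\to(0,\bar y)$ while $\delta_k\to\infty$, and
\[
\frac{1}{\delta_k}\,f(\delta_k u'_k,\delta_k y_k)=\frac{1}{\delta_k}\,f(u_k,x_k)\geq\frac{\alpha}{\delta_k}\longrightarrow 0.
\]
Reading off the definition of the horizon function at the point $(0,\bar y)\in\R^m\times\R^n$ yields $f^\infty(0,\bar y)\geq 0$, so $\bar y\in\cK\setminus\{0\}$, contradicting $\cK=\{0\}$.

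With uniform level-boundedness established, the remaining statements follow by standard Weierstrass/compactness arguments. For $u\in\dom p$, taking a maximising sequence $x_k$ with $f(u,x_k)\uparrow p(u)$, the level bound gives a bounded sequence; a cluster point $\widehat x$ satisfies $f(u,\widehat x)\geq p(u)$ by upper-semicontinuity, hence equality, proving attainment. For upper-semicontinuity of $p$, given $u_k\to u$, pick $x_k$ with $f(u_k,x_k)\geq p(u_k)-1/k$; uniform level-boundedness near $u$ forces $(x_k)$ to be bounded, and a cluster point $\widehat x$ satisfies $f(u,\widehat x)\geq\limsup_k p(u_k)$, whence $p(u)\geq\limsup_k p(u_k)$. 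Properness of $p$ is inherited from properness of $f$ together with the bound $p(u)<+\infty$, the latter being immediate since $f(u,\cdot)$ has compact nonempty upper level sets. Finally, the horizon-function identity together with attainment on $\dom p^\infty$ follows either by citing~\cite[Theorem~3.31]{RockafellarWets} directly or by repeating the same contradiction/compactness scheme with $f^\infty$ in place of $f$, noting that $f^\infty$ inherits uniform level-boundedness in $x$ from $f$.
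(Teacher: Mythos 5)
Your proposal is correct and follows essentially the same route as the paper: the paper's proof consists of a one-line citation of \cite[Theorem~3.31]{RockafellarWets} and \cite[Theorem~1.17]{RockafellarWets}, which are exactly the two results you invoke. Your additional rescaling/compactness argument deriving uniform level-boundedness from $\cK=\{0\}$ is a faithful reconstruction of the proof of those cited theorems (adapted to the sup-formulation), so it supplies detail the paper omits rather than a different idea.
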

\begin{proof}
This is simply the summary of \cite[Theorem~3.31, p.93]{RockafellarWets} together with \cite[Theorem~1.17, p.16]{RockafellarWets}.
\end{proof}

Consider the function  
\begin{equation*}
\Phi\colon h\mapsto\inf_{P\in\fP} E^P[\Psi(h)].
\end{equation*}
 Observe that   $\Phi$ is upper-semicontinuous as an infimum of upper-semicontinuous functions. It is also proper, i.e. not identically equal to $-\infty$, by Assumption~\ref{ass:1-Per}(3). Moreover, we have the following.

\begin{lemma}\label{le:psi-phi-infty}
Let $\Psi\colon\Omega\times \R^{d} \to \R\cup\{-\infty\}$ satisfy Assumption~\ref{ass:1-Per}. Then 
\begin{equation*}
\Psi^{\infty}(\omega,h)\leq 0, \quad \quad \mbox{and}  \quad\quad \Phi^\infty(h)\leq \inf_{P \in \fP} E^P[\Psi^\infty(h)]\leq 0 \quad\quad  \forall \omega \in \Omega, h \in \R^d.
\end{equation*}
 Furthermore, if the no-arbitrage condition \eqref{eq:def-NA-1-Per} holds, then 
\begin{equation*}
\Phi^\infty(h) =0 \ \Longleftrightarrow \
\Psi^\infty(h)=0 \ \fP\mbox{-q.s.}  \ \Longleftrightarrow \ h=0.
\end{equation*}
\end{lemma}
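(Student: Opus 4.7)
The plan is to prove the two estimates in the first part by direct computation based on the uniform upper bound in Assumption~\ref{ass:1-Per}(2), and then to derive the chain of equivalences in the second part by combining those estimates with Assumption~\ref{ass:1-Per}(3) and the no-arbitrage condition~\eqref{eq:def-NA-1-Per}.

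First, for the pointwise bound $\Psi^\infty(\omega,h)\leq 0$, I would plug the inequality $\Psi(\omega,\delta y)\leq C$ directly into the defining formula of the horizon function: for every $\omega\in\Omega$ and every $n\in\N$,
\[
  \sup_{\delta>n,\,|y-h|<1/n}\frac{1}{\delta}\Psi(\omega,\delta y)\;\leq\;\frac{\max(C,0)}{n}\;\xrightarrow[n\to\infty]{}\;0.
\]
For the second estimate, fix $P\in\fP$ and a realising sequence $(\delta_n,y_n)$ with $\delta_n\to\infty$, $y_n\to h$ and $(1/\delta_n)\Phi(\delta_n y_n)\to\Phi^\infty(h)$. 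Since $\Phi(x)\leq E^P[\Psi(x)]$ for every $x\in\R^d$ and the random variables $(1/\delta_n)\Psi(\delta_n y_n)$ are uniformly dominated above by the integrable constant $\max(C,0)$, reverse Fatou yields
\[
  \Phi^\infty(h)=\lim_n\frac{\Phi(\delta_n y_n)}{\delta_n}\;\leq\;\limsup_n E^P\!\Big[\tfrac{\Psi(\delta_n y_n)}{\delta_n}\Big]\;\leq\;E^P\!\Big[\limsup_n\tfrac{\Psi(\delta_n y_n)}{\delta_n}\Big]\;\leq\;E^P[\Psi^\infty(h)],
\]
where the last step uses the definition of $\Psi^\infty$ applied pointwise in $\omega$. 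Taking the infimum over $P\in\fP$ completes Part~1.

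For Part~2 I would argue the chain of implications $h=0\Rightarrow\Phi^\infty(h)=0\Rightarrow\Psi^\infty(h)=0\ \fP\mbox{-q.s.}\Rightarrow h=0$. The first implication uses that along the constant sequence $y\equiv 0$ one has $(1/\delta)\Phi(0)\to 0$ as $\delta\to\infty$ (finiteness of $\Phi(0)$ being Assumption~\ref{ass:1-Per}(3)), so $\Phi^\infty(0)\geq 0$; combined with $\Phi^\infty(0)\leq 0$ from Part~1 this forces $\Phi^\infty(0)=0$. The second implication is the sandwich $0=\Phi^\infty(h)\leq E^P[\Psi^\infty(h)]\leq 0$ valid for every $P\in\fP$ by Part~1 and the pointwise bound $\Psi^\infty\leq 0$; it forces $E^P[\Psi^\infty(h)]=0$ and hence $\Psi^\infty(h)=0$ $P$-a.s.\ for every $P\in\fP$, i.e., $\fP$-quasi surely. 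The third implication is immediate: $\Psi^\infty(h)=0\geq 0$ $\fP$-q.s.\ puts $h$ in $\cK$, which equals $\{0\}$ by~\eqref{eq:def-NA-1-Per}.

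The main technical obstacle is the reverse-Fatou step in Part~1: it relies crucially on the uniform upper bound from Assumption~\ref{ass:1-Per}(2) to produce an integrable dominator for $(1/\delta_n)\Psi(\delta_n y_n)$. Without this one could not interchange $\limsup$ with $E^P$ in the required direction, and the key inequality $\Phi^\infty\leq E^P[\Psi^\infty]$, which drives the entire equivalence in Part~2, would fail.
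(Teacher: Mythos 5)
Your proposal is correct and follows essentially the same route as the paper: the uniform upper bound $C$ justifies interchanging the limit with $E^P$ to get $\Phi^\infty(h)\leq\inf_{P\in\fP}E^P[\Psi^\infty(h)]\leq 0$, and the equivalences then follow from the circular chain $h=0\Rightarrow\Phi^\infty(h)=0\Rightarrow\Psi^\infty(h)=0$ $\fP$-q.s.\ $\Rightarrow h\in\cK=\{0\}$, exactly as in the paper. The only (immaterial) difference is mechanical: you extract a realising sequence and apply reverse Fatou, whereas the paper exploits monotonicity in $n$ of the suprema and uses monotone convergence directly on the defining expression of the horizon function.
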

\begin{proof}
As taking the limit in $n$ in the expression of $\Phi^\infty$ is the same as taking the $\inf_{n \in \N}$, and by monotone convergence theorem, as $\Psi\leq C$ by assumption, we have
\begin{align*}
  \Phi^\infty(x) 
  &=
  \inf_{n \in \N}\,\, \sup_{\substack{\delta>n,\\|x-y|<\frac1n}}\,\frac1\delta \inf_{P \in \fP}E^P\big[\Psi(\delta y)\big]
  \leq
  \, \inf_{P \in \fP}\inf_{n \in \N}E^P\bigg[\sup_{\delta>n,\ |x-y|<\frac1n}\,\frac1\delta\Psi(\delta y)\bigg]
  \\[1ex]&=
  \,\, \inf_{P \in \fP}E^P\bigg[\lim_{n\rightarrow\infty}\sup_{\delta>n,\ |x-y|<\frac1n}\,\frac1\delta\Psi(\delta y)\bigg]
  =\inf_{P \in \fP} E^P\big[\Psi^\infty(x)\big].
\end{align*}
As $\Psi$ is uniformly bounded from above, we have $\Psi^\infty\leq 0$ which then by the above inequality also implies that $\Phi^\infty\leq 0$. In particular, $\Phi^\infty=0$ implies that $\Psi^\infty=0 \ \fP$-q.s. and hence by definition $h \in  \cK$. The no-arbitrage condition \eqref{eq:def-NA-1-Per} now implies that $h=0$.

For the other direction, let $h=0$. Then as $\Psi\leq C$  and $\inf_{P\in\fP} E^P[\Psi(0)]>-\infty$ by assumption, we have that
\begin{align*}
\Phi^\infty(0)
=\lim_{n\rightarrow\infty}\sup_{\delta>n,\ |y|<\frac1n}\,\frac1\delta\inf_{P \in \fP}E^P\big[\Psi(\delta y)\big]
\geq \lim_{n\rightarrow\infty}\sup_{\delta>n}\,\frac1\delta\inf_{P \in \fP}E^P\big[\Psi(0)\big]
=0.
\end{align*}
\end{proof}
\begin{remark}\label{rem:1-per-le-no-usc}
	Notice that in Lemma~\ref{le:psi-phi-infty}, the assumption that the map $x\mapsto \Psi(\omega,x)$ is upper-semicontinuous for all $\omega \in \Omega$ is not necessary. 
\end{remark}

\begin{proof}[Proof of Theorem~\ref{thm:1-Per}]
By the no-arbitrage condition \eqref{eq:def-NA-1-Per} and Lemma~\ref{le:psi-phi-infty}, we see that the conditions of
Proposition~\ref{prop:key-argument-RW} are fulfilled for $f\equiv\Phi$, which gives us the existence of a maximizer $\widehat h$, as by Assumption~\ref{ass:1-Per}(3)
\begin{align*}
\sup_{h \in \R^d} \Phi(h)
= 
\sup_{h \in \R^d} \inf_{P \in \fP} E^P[\Psi(h)]
\geq\inf_{P \in \fP} E^P[\Psi(0)]
>-\infty.
\end{align*}
\end{proof}


%
%
\section{Multi-Period-Model}\label{sec:proof-multi-period}
We denote by $\omega^t\otimes_t\tilde{\omega}$ the pair $(\omega^t,\tilde{\omega})\in \Omega^{t+1}$, where $\omega^t \in \Omega^t$  and $\tilde{\omega}\in \Omega_1$, and define the following sequences of maps: set $\Psi_{T}:=\Psi$ and for $t=T-1,\dots,0$ and $\omega^t \in \Omega^t$ define
\begin{align}
\Phi_t(\omega^t,x^{t+1})
&:=\inf_{P \in \fP_t(\omega^t)}E^P[\Psi_{t+1}(\omega^t\otimes_t\,\cdot,x^{t+1})], \nonumber\\
\Psi_t(\omega^t,x^{t})
&:=\sup_{\tilde{x}\in \R^d} \Phi_t(\omega^t,x^{t},\tilde{x}).
\label{eq:Multi-P-Recursion}
\end{align}
%
%
%
%
%
%
For each $t \in \{0,\dots,T-1\}$, denote by $\cH^t$ the set of all $\F$-adapted, $\R^d$-valued processes $H^t:=(H_0,\dots,H_{t-1})$; these are just restrictions of strategies in $\cH$ to the first $t$ time steps. In the same way, for any set $\cD\subseteq\R^{dT}$ define the subset $\cD^t\subseteq \R^{dt}$ to be the projection of $\cD$ onto the first $t$ components, i.e. first $dt$ coordinates.
%
%
We start with the following simple but important result.
\begin{proposition}\label{Psi-DPP-Ass-Multi-P-OK}
If $\Psi$ satisfies Assumption~\ref{ass:Psi-MultiP}, then for any $t \in \{0,\dots,T-1\}$ also the functions $\Psi_{t+1}$ and $\Phi_t$ satisfy Assumption~\ref{ass:Psi-MultiP}. 
\end{proposition}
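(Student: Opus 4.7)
The plan is to argue by downward induction on $t$, with base case $\Psi_T=\Psi$ given by hypothesis. At the inductive step, assuming $\Psi_{t+1}$ satisfies Assumption~\ref{ass:Psi-MultiP}, I would check each of items (1)--(4) for $\Phi_t$ first and then for $\Psi_t$. Let $\cD^s\subseteq\R^{ds}$ denote the projection of $\cD$ onto the first $s$ coordinates; the grid condition descends to every $\cD^s$ because any pair of points in $\cD^s$ lifts to $\cD$.

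The grid and boundedness items (1), (2) are short: outside $\cD^{t+1}$ the integrand $\Psi_{t+1}$ is identically $-\infty$, so $\Phi_t$ vanishes there; outside $\cD^t$ every extension $(x^t,\tilde x)$ leaves $\cD^{t+1}$, so $\Psi_t$ vanishes there. The uniform upper bound $C$ is preserved trivially by conditional expectation and by supremum. For item (4), I would use the standard Bouchard--Nutz Fubini identity
\[
\inf_{P\in\fP^{t+1}}E^{P}\bigl[\Psi_{t+1}(0)\bigr]\;=\;\inf_{P^{t}\in\fP^{t}}E^{P^{t}}\bigl[\Phi_{t}(\cdot,0)\bigr];
\]
the $\leq$ direction is pointwise, and $\geq$ follows from a Jankov--von Neumann $\varepsilon$-measurable selection inside each $\fP_t(\omega^t)$. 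This transfers (4) from $\Psi_{t+1}$ to $\Phi_t$, and the pointwise bound $\Psi_t(\omega^t,0)\geq\Phi_t(\omega^t,0,0)$ then transfers it to $\Psi_t$.

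The real work, and the step I expect to be the main obstacle, is lower semianalyticity (item (3)). For $\Phi_t$ I would proceed in three layers. First, composition with the Borel map $(\omega^t,\tilde\omega)\mapsto\omega^t\otimes_t\tilde\omega$ preserves lower semianalyticity. Second, by \cite[Proposition~7.48]{BertsekasShreve.78}, integration against a probability measure keeps the map jointly lower semianalytic in $(\omega^t,x^{t+1},P)$. Third, and this is where the analyticity of $\mbox{graph}(\fP_t)$ is indispensable, I would write
\[
\{\Phi_t<c\}\;=\;\proj_{\Omega^t\times\R^{d(t+1)}}\Bigl(\bigl\{(\omega^t,x^{t+1},P):(\omega^t,P)\in\mbox{graph}(\fP_t)\bigr\}\cap\bigl\{E^{P}[\Psi_{t+1}(\omega^t\otimes_t\cdot,x^{t+1})]<c\bigr\}\Bigr),
\]
which is the projection of an intersection of two analytic sets, hence analytic.

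For $\Psi_t$ the sup in $\tilde x\in\R^d$ of a lower semianalytic function is in general not lower semianalytic, and this is precisely where Assumption~\ref{ass:Psi-MultiP}(1) pays off: the grid condition forces the projection of $\cD^{t+1}$ onto its last $d$ coordinates to be discrete and hence countable, while $\Phi_t(\omega^t,x^t,\tilde x)=-\infty$ for every $\tilde x$ outside that projection. The sup defining $\Psi_t$ therefore reduces to a countable sup of lower semianalytic functions, which is lower semianalytic because $\{\sup_n f_n<c\}=\bigcap_n\{f_n<c\}$ and countable intersections of analytic sets are analytic. This closes the induction.
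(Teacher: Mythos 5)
Your proposal follows essentially the same route as the paper's proof: items (1) and (2) are handled identically, item (4) via an $\varepsilon$-optimal measurable selection of kernels plus Fatou (this is the content of Lemma~\ref{le:ass-psi-phi-multiP}), lower semianalyticity of $\Phi_t$ via the projection-of-analytic-sets argument that the paper delegates to \cite[Lemma~5.9]{NeufeldSikic.16}, and lower semianalyticity of $\Psi_t$ by using the grid-condition to reduce the supremum over $\tilde x\in\R^d$ to a countable supremum over the projection of $\cD$ onto the $(t+1)$-st block of coordinates. One small correction: the identity $\{\sup_n f_n<c\}=\bigcap_n\{f_n<c\}$ is false (only the inclusion $\subseteq$ holds, as $f_n=c-\tfrac1n$ shows); the countable supremum of lower semianalytic functions is nevertheless lower semianalytic, via $\{\sup_n f_n<c\}=\bigcup_m\bigcap_n\{f_n<c-\tfrac1m\}$ or simply by citing \cite[Lemma~7.30(2)]{BertsekasShreve.78}.
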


Next, define the no-arbitrage condition up to time $t$, denoted by $\mbox{NA}(\fP)^t$, for the mappings $(\Psi_t)$ in the natural way, by saying
\begin{align*}
  \cK^t := \big\{H^t\in\cH^t\,\big|\,\Psi_t^\infty(H^t)\geq 0\quad\fP\textrm{-q.s.}\big\}=\{0\}.
\end{align*}
Condition $\mbox{NA}(\fP)^t$ is a statement about a set of strategies and as such cannot yet be used to prove things that we need it for. What we need is a local version of the no-arbitrage condition. 
\begin{definition}\label{def:NA-t}
	For each $t \in \{0,\dots,T-1\}$ and $\omega^t \in \Omega^t$ define a set 
	\begin{equation}\label{eq:def-K-t-local}
	K_t(\omega^t)
	:=\{h \in \R^{d} \,|\, \Phi^\infty_{t}(\omega^t,0,\dots,0,h)\geq 0\}.
	\end{equation}
	We say that condition $\mbox{NA}_t$ holds if
\begin{align*}
\big\{\omega^t \in \Omega^t \, \big|\, K_t(\omega^t)=\{0\} \big\} \  \mbox{ has }\fP\mbox{-full measure.}
\end{align*}
\end{definition}

\begin{example}
  To briefly sketch what this condition means, consider the frictionless market model with $t=T-1$, initial capital $x>0$ and $U\colon(0,\infty)\rightarrow\R$. In this case the utility is defined by $\Psi(H)\coloneqq U(x + \sum_t H_t(S_{t+1}-S_t))=\Psi_T(H)$. Given that the utility function is concave and bounded above, we get, assuming Inada conditions, that $\Psi_T^\infty(H) = 0$ whenever $\sum_t H_t(S_{t+1}-S_t)\geq0$ and $-\infty$ otherwise. Thus, by definition, $K_t(\omega^t) = \{h\in\R^d\,|\,h(S_{t+1}(\omega^t,\cdot) - S_t(\omega^t))\geq 0\ \fP_t(\omega^t)-q.s.\}.$ We refer the reader to~\cite{NeufeldSikic.16} for further details and discussion of the condition.
\end{example}

%
%
\begin{proposition}\label{prop:local-NA}
	Let $t\in\{0,\ldots,T-1\}$. Assume that $\Psi_{t+1}$ satisfies Assumption~\ref{ass:Psi-MultiP}. If the no-arbitrage condition  $\mbox{NA}(\fP)^{t+1}$ up to time $t+1$ holds, 
	then the local no-arbitrage condition $\NA_t$ holds.
\end{proposition}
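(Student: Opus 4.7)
The plan is to argue by contradiction. Assume $\NA_t$ fails, so that the set
\[
N := \{\omega^t\in\Omega^t : K_t(\omega^t) \neq \{0\}\}
\]
is not $\fP$-polar. I will construct a strategy $H^{t+1} \in \cH^{t+1}$ of the form $H^{t+1} = (0,\ldots,0,h_t)$, where $h_t$ is an $\cF_t$-measurable selector of $K_t\setminus\{0\}$ on $N$ and $h_t\equiv 0$ off $N$. Such an $H^{t+1}$ will satisfy $\Psi_{t+1}^\infty(H^{t+1}) \geq 0$ $\fP$-q.s.\ yet not be the zero process $\fP$-q.s., contradicting $\NA(\fP)^{t+1}$.

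The first key step is the measurable selection. By \cite[Theorem~3.21]{RockafellarWets}, $\Phi_t^\infty(\omega^t,\cdot)$ is upper-semicontinuous and positively homogeneous, so $K_t(\omega^t)$ is a closed cone. Proposition~\ref{Psi-DPP-Ass-Multi-P-OK} gives that $\Phi_t$ satisfies Assumption~\ref{ass:Psi-MultiP}; in particular $\Phi_t$ is lower semianalytic and its domain is contained in the grid-set $\cD^{t+1}$. Exploiting this grid-condition together with upper-semicontinuity of $\Phi_t(\omega^t,\cdot)$, one reduces the $\limsup$ defining $\Phi_t^\infty$ to a countable operation over grid points, thereby preserving lower semianalyticity and showing that $\mathrm{graph}(K_t)$ is analytic. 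The Jankov-von Neumann theorem (\cite[Proposition~7.49]{BertsekasShreve.78}) then produces a universally measurable, hence $\cF_t$-measurable, selector $h_t\colon\Omega^t\to\R^d$ with $h_t(\omega^t) \in K_t(\omega^t)\setminus\{0\}$ on $N$; using that $K_t(\omega^t)$ is a cone I normalize to $|h_t|=1$ on $N$ and extend by $0$ off $N$.

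Next, the one-step inequality of Lemma~\ref{le:psi-phi-infty}, whose derivation only uses $\Psi_{t+1}\leq C$ and monotone convergence, transfers verbatim to the conditional setting to give
\[
\Phi_t^\infty(\omega^t,x^{t+1}) \;\leq\; \inf_{P\in\fP_t(\omega^t)} E^P\bigl[\Psi_{t+1}^\infty(\omega^t\otimes_t\,\cdot,\,x^{t+1})\bigr] \;\leq\; 0.
\]
For $\omega^t\in N$, inserting $x^{t+1}=(0,\ldots,0,h_t(\omega^t))$ forces equality throughout, and since $\Psi_{t+1}^\infty\leq 0$ we deduce
\[
\Psi_{t+1}^\infty\bigl(\omega^t\otimes_t\tilde\omega,0,\ldots,0,h_t(\omega^t)\bigr) = 0 \quad P\text{-a.s.\ for every } P\in\fP_t(\omega^t).
\]
On $N^c$ we have $h_t=0$, and Assumption~\ref{ass:Psi-MultiP}(4) together with the positive homogeneity of horizon functions yields $\Psi_{t+1}^\infty(\cdot,0) = 0$ $\fP$-q.s. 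A Fubini-style decomposition of each $P = P_0\otimes\cdots\otimes P_{T-1}\in\fP$ into its kernels then delivers $\Psi_{t+1}^\infty(H^{t+1}) \geq 0$ $\fP$-q.s. Since $\{h_t\neq 0\}=N$ is not $\fP$-polar, $H^{t+1}$ is not the zero process $\fP$-q.s., yielding the desired contradiction with $\NA(\fP)^{t+1}$.

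The main technical obstacle is the analyticity of $\mathrm{graph}(K_t)$ in the selection step: horizon functions are defined by limits of suprema over uncountable sets, an operation which in general destroys lower semianalyticity. This is precisely where the grid-condition in Assumption~\ref{ass:Psi-MultiP}(1) is indispensable, as it reduces the $\limsup$ to a countable one. Once the selector is in hand, the Fubini-type integration over $N$ and $N^c$ is routine.
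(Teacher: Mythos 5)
Your overall strategy is the same as the paper's, just phrased as a contradiction rather than a contrapositive: the paper takes a Castaing representation $\{x_n\}$ of the closed-valued measurable correspondence $K_t$ (Lemma~\ref{le:K-t-local-nice}), shows via Lemma~\ref{le:-local-K-global-K-relation} that each $(0,\dots,0,x_n)$ lies in $\cK^{t+1}$ and hence vanishes $\fP$-q.s., and concludes $K_t=\{0\}$ q.s.; your single normalized selector on $N$ plays exactly the role of one $x_n$. Your middle block (the conditional version of Lemma~\ref{le:psi-phi-infty}, forcing $\Psi^\infty_{t+1}=0$ under every kernel, then Fubini) is precisely the paper's Lemma~\ref{le:ass-psi-phi-multiP} combined with Lemma~\ref{le:-local-K-global-K-relation}, and that part is correct.

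The genuine gap is in how you produce the measurable selector. You claim $\mathrm{graph}(K_t)$ is analytic and invoke Jankov--von Neumann, but $K_t(\omega^t)=\{h\,|\,\Phi_t^\infty(\omega^t,0,\dots,0,h)\geq 0\}$ is an \emph{upper} level set: lower semianalyticity of a function $f$ makes the strict sublevel sets $\{f<c\}$ analytic, so $\{f\geq 0\}$ is only coanalytic, and complements of analytic sets need not be analytic. To get analyticity of the graph you would need $\Phi_t^\infty$ to be \emph{upper} semianalytic, which nothing in Assumption~\ref{ass:Psi-MultiP} provides (and reducing the $\limsup$ to grid points preserves lower, not upper, semianalyticity). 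The paper's route avoids this entirely: since $\Phi_t$ is universally measurable with $\dom\Phi_t(\omega^t,\cdot)\subseteq\cD^{t+1}$, Lemma~\ref{le:normal} shows $\Phi_t$ is an $\cF_t$-normal integrand, hence so is $\Phi_t^\infty$ by \cite[Exercise~14.54(a)]{RockafellarWets}; then $K_t$ is a closed-valued $\cF_t$-measurable correspondence by \cite[Propositions~14.33 and~14.45(a)]{RockafellarWets}, and \cite[Theorem~14.5(a)]{RockafellarWets} yields the Castaing representation (equivalently, your selector, after intersecting with the unit sphere on $N$), with $\cF_t$ the universal completion. So your conclusion and the surrounding argument stand, but the selection step must be justified by normal-integrand/measurable-correspondence theory over the universally completed $\sigma$-field rather than by analyticity of the graph.
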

Having this result at hand, one can proceed as in the one-step case. The following is a direct consequence.
\begin{proposition}\label{prop:Psi-MultiP-OneP}
	Let $t \in \{0,\dots,T-1\}$. Assume that 
	$\Psi_{t+1}$ satisfies Assumption~\ref{ass:Psi-MultiP} and that
	$\mbox{NA}(\fP)^{t+1}$ holds. Then for every $H^t \in \cH^t$ such that $\sup_{x\in \R^d}\Phi_t(\omega^t,H^t(\omega^{t-1}),x)>-\infty$ \ $\fP\mbox{-q.s.}$ there exists an 
			$\cF_t$-measurable mapping $\widehat{h}_t:\Omega^t\to\R^d$ satisfying
			\begin{equation*}
			\Phi_t(\omega^t,H^t(\omega^{t-1}),\widehat{h}_t)
			=\Psi_t(\omega^t,H^t(\omega^{t-1})) 
			\qquad
			\mbox{for }\fP\mbox{-q.e. }\omega^t \in \Omega^t.
			\end{equation*}
\end{proposition}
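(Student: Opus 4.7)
My plan is to combine the one-period existence result (Proposition~\ref{prop:key-argument-RW}) applied pointwise in $\omega^t$ with a measurable selection argument to promote the pointwise maximizer to an $\cF_t$-measurable map. The ingredients needed for both are already on the table: by Proposition~\ref{Psi-DPP-Ass-Multi-P-OK} the function $\Phi_t$ itself satisfies Assumption~\ref{ass:Psi-MultiP}, so it is lower semianalytic, bounded above by $C$, and the grid-condition gives upper-semicontinuity of $x^{t+1}\mapsto\Phi_t(\omega^t,x^{t+1})$ by Remark~\ref{rem:Psi-usc}; Proposition~\ref{prop:local-NA} in turn converts the global no-arbitrage $\NA(\fP)^{t+1}$ into the local condition $K_t(\omega^t)=\{0\}$ on a $\fP$-full-measure set $\Omega_0\subseteq\Omega^t$.

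Next, I would fix any $\omega^t\in\Omega_0$ with the additional property that $\sup_{x\in\R^d}\Phi_t(\omega^t,H^t(\omega^{t-1}),x)>-\infty$ (by hypothesis this is again a $\fP$-full-measure set). Apply Proposition~\ref{prop:key-argument-RW} to $f(u,x):=\Phi_t(\omega^t,u,x)$ with $m=dt$ and $n=d$: properness follows from $u=H^t(\omega^{t-1})\in\dom p$; upper-semicontinuity is Remark~\ref{rem:Psi-usc}; and by the very definition \eqref{eq:def-K-t-local} the recession condition $\{h\in\R^d : f^\infty(0,h)\geq0\}=K_t(\omega^t)=\{0\}$ holds. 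This shows $p(u)=\Psi_t(\omega^t,u)$ is attained, i.e. the correspondence
$$
G(\omega^t):=\big\{h\in\R^d\,\big|\,\Phi_t(\omega^t,H^t(\omega^{t-1}),h)\geq\Psi_t(\omega^t,H^t(\omega^{t-1}))\big\}
$$
is non-empty on a set of full $\fP$-measure.

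The remaining and hardest step is the measurable selection. Since both $\Phi_t$ and $\Psi_t$ are lower semianalytic by Proposition~\ref{Psi-DPP-Ass-Multi-P-OK}, and the map $\omega^t\mapsto H^t(\omega^{t-1})$ is $\cF_{t-1}$-measurable hence universally (equivalently, analytically) measurable, the composition $(\omega^t,h)\mapsto\Phi_t(\omega^t,H^t(\omega^{t-1}),h)-\Psi_t(\omega^t,H^t(\omega^{t-1}))$ is analytically measurable on $\Omega^t\times\R^d$. Consequently $\mbox{graph}(G)$ lies in the analytic $\sigma$-algebra on $\Omega^t\times\R^d$, and the Jankov–von Neumann theorem \cite[Proposition~7.49]{BertsekasShreve.78} furnishes a universally measurable (hence $\cF_t$-measurable) selector $\widehat{h}_t$ on the $\fP$-full-measure set where $G\neq\varnothing$; we extend by $\widehat{h}_t:=0$ on the polar complement. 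By construction $\Phi_t(\omega^t,H^t(\omega^{t-1}),\widehat h_t)=\Psi_t(\omega^t,H^t(\omega^{t-1}))$ for $\fP$-q.e.\ $\omega^t$.

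The main obstacle is the one just indicated: the composition of a lower semianalytic function with a merely universally measurable kernel is in general not lower semianalytic. I would sidestep this by using that $\cF_{t-1}$ is the universal completion of $\cB(\Omega^{t-1})$, so $H^t$ admits a Borel-measurable modification agreeing with it outside a $\fP$-polar set; the argument above is first carried out with this Borel version (where analyticity of $\mbox{graph}(G)$ is a clean composition fact), and then transported back to $H^t$ modulo $\fP$-polar sets, which is exactly the precision the conclusion claims.
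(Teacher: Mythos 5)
Your first half --- pointwise existence of a maximizer via Proposition~\ref{prop:key-argument-RW} applied to $f(u,x)=\Phi_t(\omega^t,u,x)$, with properness from the hypothesis, upper-semicontinuity from the grid-condition, and the recession condition supplied by Proposition~\ref{prop:local-NA} --- is exactly the paper's argument and is fine. The measurable selection step, however, contains two genuine gaps. First, the graph of your correspondence $G$ is the set where one lower semianalytic function dominates another: writing $\{\Phi_t\geq\Psi_t\}$ as the complement of $\bigcup_{q\in\Q}\big(\{\Phi_t<q\}\cap\{\Psi_t\geq q\}\big)$, you obtain a set that is an intersection of countably many unions of an analytic with a coanalytic set. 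Such a set lies in the $\sigma$-algebra generated by the analytic sets (hence is universally measurable), but it is in general neither analytic nor coanalytic, and the Jankov--von Neumann theorem \cite[Proposition~7.49]{BertsekasShreve.78} applies only to \emph{analytic} sets. There is no general uniformization theorem for sets that are merely in the analytic $\sigma$-algebra, so the selector does not follow as claimed. Second, your proposed repair of the composition problem fails in the nondominated setting: a universally measurable $H^t$ agrees with a Borel map outside a $P$-null set for each \emph{fixed} $P$, but the exceptional set depends on $P$, and there is no single Borel modification valid outside a $\fP$-polar set when $\fP$ is uncountable and nondominated. So even the analyticity of $\{\Phi_t(\cdot,H^t(\cdot),\cdot)<q\}$ cannot be rescued this way.

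The paper avoids both issues by never asking for analyticity of the composed map: Lemma~\ref{le:normal} shows that \emph{universal} measurability of $\Phi_t$ together with $\dom\Phi_t(\omega^t,\cdot)\subseteq\cD^{t+1}$ and the grid-condition already make $\Phi_t$ an $\cF_t$-normal integrand; composition with the universally measurable $H^t$ preserves this (\cite[Proposition~14.45(c)]{RockafellarWets}, using that universal measurability is stable under such compositions, \cite[Proposition~7.44]{BertsekasShreve.78}), and then \cite[Theorem~14.37]{RockafellarWets} yields an $\cF_t$-measurable selector of $\argmax\Phi_t(\omega^t,H^t(\omega^{t-1}),\cdot)$ directly, with the polar-set argument you already have showing the argmax is nonempty quasi-surely. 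If you prefer to stay elementary, the same grid-condition gives a direct fix of your argument: the sections $G(\omega^t)$ are contained in the countable set $\cD_{t+1}$, so you may enumerate $\cD_{t+1}=\{k_1,k_2,\dots\}$ and take $\widehat h_t(\omega^t)$ to be the first $k_j$ with $\Phi_t(\omega^t,H^t(\omega^{t-1}),k_j)\geq\Psi_t(\omega^t,H^t(\omega^{t-1}))$; each such event is universally measurable, so no selection theorem is needed at all. Lower semianalyticity is genuinely needed elsewhere in the paper (for the infimum over kernels in Proposition~\ref{Psi-DPP-Ass-Multi-P-OK}), but not for this selection step.
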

\begin{remark}\label{rem:normal}
Compared to the one-step case where the optimization is simply over $\R^d$, there is the additional technical issue that the optimizer $\widehat{h}_t(\omega^t)$, whose pointwise existence we obtain from  the one-step case in Section~\ref{sec:1-Per}, is measurable in $\omega^t$. A key observation which is helpful for this so-called measurable selection problem is the observation in Lemma~\ref{le:normal} that both $\Psi_{t}$ and $\Phi_t$ are normal integrands 
as a direct consequence of  Assumption~\ref{ass:Psi-MultiP}.
\end{remark}
The  important step toward the proof of our main result is the observation that the  no-arbitrage condition  $\mbox{NA}(\fP)^{t}$ up to time $t$ behave well under the dynamic programming recursion. 
\begin{proposition}\label{prop:no-arbitrage-up-time-t-MultiP}
  Let $t\in \{0,\ldots,T-1\}$ and let $\Psi_{t+1}$ satisfy Assumption~\ref{ass:Psi-MultiP}. If the no-arbitrage condition  $\mbox{NA}(\fP)^{t+1}$ up to time $t+1$ holds, then so does the no-arbitrage condition  $\mbox{NA}(\fP)^{t}$ up to time $t$.
\end{proposition}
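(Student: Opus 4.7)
\textbf{Proof plan for Proposition~\ref{prop:no-arbitrage-up-time-t-MultiP}.}
The plan is to take an arbitrary $H^t\in\cH^t$ with $\Psi_t^\infty(H^t)\geq 0$ $\fP$-q.s., and use the recursive structure in \eqref{eq:Multi-P-Recursion} to lift it to an $H^{t+1}\in\cH^{t+1}$ with $\Psi_{t+1}^\infty(H^{t+1})\geq 0$ $\fP$-q.s., at which point $\mbox{NA}(\fP)^{t+1}$ forces $H^{t+1}=0$ $\fP$-q.s., and in particular $H^t=0$ $\fP$-q.s. The lift is given by an appropriate measurable selector $\widehat h_t:\Omega^t\to\R^d$ that attains the pointwise supremum in the definition $\Psi_t(\omega^t,\cdot)=\sup_{\tilde x}\Phi_t(\omega^t,\cdot,\tilde x)$ at the level of horizon functions.

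First, since $\Psi_{t+1}$ satisfies Assumption~\ref{ass:Psi-MultiP}, Proposition~\ref{Psi-DPP-Ass-Multi-P-OK} shows that $\Phi_t$ does as well, in particular $\Phi_t(\omega^t,\cdot)$ is proper upper-semicontinuous $\fP$-q.s. Moreover, Proposition~\ref{prop:local-NA} yields the local no-arbitrage condition $\NA_t$, i.e.\ $\{h\in\R^d:\Phi_t^\infty(\omega^t,0,\ldots,0,h)\geq 0\}=\{0\}$ for $\fP$-q.e.\ $\omega^t$. For such $\omega^t$, I would apply Proposition~\ref{prop:key-argument-RW} to the function $(x^t,\tilde x)\mapsto\Phi_t(\omega^t,x^t,\tilde x)$: this gives $\Psi_t^\infty(\omega^t,x^t)=\sup_{\tilde x\in\R^d}\Phi_t^\infty(\omega^t,x^t,\tilde x)$, with the supremum attained whenever $x^t\in\dom\Psi_t^\infty(\omega^t,\cdot)$. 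The hypothesis $\Psi_t^\infty(\omega^t,H^t(\omega^{t-1}))\geq 0$ in particular places $H^t(\omega^{t-1})$ in that domain.

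Next I would invoke a measurable selection theorem to obtain a universally measurable $\widehat h_t:\Omega^t\to\R^d$ with
\[
  \Phi_t^\infty\bigl(\omega^t,H^t(\omega^{t-1}),\widehat h_t(\omega^t)\bigr)=\Psi_t^\infty\bigl(\omega^t,H^t(\omega^{t-1})\bigr)\geq 0\qquad\fP\text{-q.s.}
\]
This is where the normal integrand structure of $\Psi_{t+1}$, hence of $\Phi_t$ and of $\Phi_t^\infty$ (via \cite[Exercise~14.54(a)]{RockafellarWets}), is essential, since standard measurable maximum selection theorems for normal integrands then apply on the $\fP$-full-measure set where $\NA_t$ holds. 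This step is the main obstacle: one must verify normality/lower-semianalyticity of the auxiliary map and patch the selector with the value $0$ off the good set, keeping it $\cF_t$-measurable.

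Finally, I would repeat the argument of Lemma~\ref{le:psi-phi-infty}, now applied conditionally with $\fP$ replaced by $\fP_t(\omega^t)$, to obtain
\[
  0\leq\Phi_t^\infty(\omega^t,H^t,\widehat h_t)\leq\inf_{P\in\fP_t(\omega^t)}E^P\bigl[\Psi_{t+1}^\infty(\omega^t\otimes_t\cdot,H^t,\widehat h_t)\bigr].
\]
Since $\Psi_{t+1}\leq C$ implies $\Psi_{t+1}^\infty\leq 0$ pointwise, the right-hand side being nonnegative forces $\Psi_{t+1}^\infty(\omega^t\otimes_t\,\cdot,H^t,\widehat h_t)=0$ for $P$-a.e.\ $\tilde\omega$ and every $P\in\fP_t(\omega^t)$. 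Setting $H^{t+1}:=(H^t,\widehat h_t)\in\cH^{t+1}$ and assembling these statements across any $P=P_0\otimes\cdots\otimes P_{T-1}\in\fP$ via Fubini yields $\Psi_{t+1}^\infty(H^{t+1})\geq 0$ $\fP$-q.s. By hypothesis $\mbox{NA}(\fP)^{t+1}$, this gives $H^{t+1}=0$ $\fP$-q.s., and projecting on the first $t$ coordinates yields $H^t=0$ $\fP$-q.s., establishing $\mbox{NA}(\fP)^t$.
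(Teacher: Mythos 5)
Your proposal is correct and follows essentially the same route as the paper's proof: both obtain, via the local condition $\NA_t$ (from Proposition~\ref{prop:local-NA}) and Proposition~\ref{prop:key-argument-RW}, a measurable selector $\widehat h_t$ attaining $\sup_{\tilde x}\Phi_t^\infty(\omega^t,H^t,\tilde x)=\Psi_t^\infty(\omega^t,H^t)$, then squeeze $0\leq\Phi_t^\infty\leq\inf_{P\in\fP_t(\omega^t)}E^P[\Psi_{t+1}^\infty]\leq 0$ and apply Fubini to conclude $(H^t,\widehat h_t)\in\cK^{t+1}$. The only cosmetic difference is that the paper argues by contradiction while you argue directly, and the paper applies Proposition~\ref{prop:key-argument-RW} to $\Phi_t^\infty$ (using $(\Phi_t^\infty)^\infty=\Phi_t^\infty$) rather than to $\Phi_t$; these are equivalent.
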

The proofs of Propositions~\ref{Psi-DPP-Ass-Multi-P-OK}  \& \ref{prop:local-NA}--\ref{prop:no-arbitrage-up-time-t-MultiP} will be given in the Subsection~\ref{subsec:global-local-prop-proof}.
%
%
%
%


%
\subsection{Proofs of Propositions~\ref{Psi-DPP-Ass-Multi-P-OK} \& \ref{prop:local-NA}--\ref{prop:no-arbitrage-up-time-t-MultiP}}\label{subsec:global-local-prop-proof}
We first start with a  useful lemma providing the relation between $\Psi_{t+1}$ and $\Phi_t$.
\begin{lemma}\label{le:ass-psi-phi-multiP}
Let $t \in \{0,\dots,T-1\}$ and $\Psi_{t+1}$ satisfies Assumption~\ref{ass:Psi-MultiP}. Then 
\begin{itemize} 
\item[1)] $\Phi_{t}$ satisfies Assumption~\ref{ass:Psi-MultiP}(4);

\item[2)] For all $(\omega^t\otimes_t\tilde{\omega},x^{t+1})\in \Omega^{t+1}\times \R^{d(t+1)}$ we have
\begin{equation*}
\Psi_{t+1}^{\infty}(\omega^t\otimes_t\tilde{\omega},x^{t+1})\leq 0, \quad  \mbox{and}  \quad \Phi_t^\infty(\omega^t,x^{t+1})\leq \inf_{P \in \fP_t(\omega^t)} E^P[\Psi_{t+1}^\infty(\omega^t\otimes_t\cdot,x^{t+1})]\leq 0.
\end{equation*}
\end{itemize}
\end{lemma}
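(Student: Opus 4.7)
The plan is to handle the two parts separately: Part 2 is essentially the one-period argument of Lemma~\ref{le:psi-phi-infty} transplanted pointwise in $\omega^t$, whereas Part 1 is the substantively new ingredient and requires a measurable selection to reduce Assumption~\ref{ass:Psi-MultiP}(4) for $\Phi_t$ to the same property for $\Psi_{t+1}$.

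For Part 2, the inequality $\Psi_{t+1}^\infty\leq 0$ is immediate from Assumption~\ref{ass:Psi-MultiP}(2): since $\Psi_{t+1}\leq C$, the limit defining $\Psi_{t+1}^\infty$ is dominated by $\lim_n C/n=0$. For $\Phi_t^\infty$, I would follow the template of Lemma~\ref{le:psi-phi-infty}: rewrite $\lim_n$ as $\inf_n$, apply the minimax inequality $\sup\inf\leq\inf\sup$ to push $\inf_{P\in\fP_t(\omega^t)}$ outside the sup in $(\delta,y)$, swap the two infima (in $n$ and in $P$), and invoke monotone convergence for decreasing sequences, legitimate because the integrand is decreasing in $n$ and uniformly dominated by $C/n$. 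This delivers exactly $\Phi_t^\infty(\omega^t,x^{t+1})\leq \inf_{P\in\fP_t(\omega^t)} E^P[\Psi_{t+1}^\infty(\omega^t\otimes_t\cdot,x^{t+1})]\leq 0$.

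For Part 1, the goal is $\inf_{P\in\fP} E^P[\Phi_t(\cdot,0)]>-\infty$. Fix $\varepsilon>0$. Since $\Psi_{t+1}$ is lower semianalytic by Assumption~\ref{ass:Psi-MultiP}(3) and $\mathrm{graph}(\fP_t)$ is analytic by hypothesis, the map $(\omega^t,P')\mapsto E^{P'}[\Psi_{t+1}(\omega^t\otimes_t\cdot,0)]$ is lower semianalytic via \cite[Proposition~7.48]{BertsekasShreve.78}, so the Jankov--von Neumann $\varepsilon$-optimal selection theorem (\cite[Proposition~7.50]{BertsekasShreve.78}) yields a universally measurable kernel $\hat{P}_t\colon\Omega^t\to\fM_1(\Omega_1)$ with $\hat{P}_t(\omega^t)\in\fP_t(\omega^t)$ and
$$E^{\hat{P}_t(\omega^t)}\big[\Psi_{t+1}(\omega^t\otimes_t\cdot,0)\big]\leq \Phi_t(\omega^t,0)+\varepsilon.$$
Given any $P=P_0\otimes\cdots\otimes P_{T-1}\in\fP$, I would build $\hat{P}:=P_0\otimes\cdots\otimes P_{t-1}\otimes\hat{P}_t\otimes\tilde P_{t+1}\otimes\cdots\otimes\tilde P_{T-1}\in\fP$, where $\tilde P_s$ are arbitrary Jankov--von Neumann selectors from $\fP_s$, and conclude by Fubini:
$$E^P[\Phi_t(\cdot,0)]=E^{P^t}[\Phi_t(\cdot,0)]\geq E^{P^t}\big[E^{\hat{P}_t}[\Psi_{t+1}(\cdot,0)]\big]-\varepsilon=E^{\hat{P}}[\Psi_{t+1}(0)]-\varepsilon\geq \inf_{P'\in\fP} E^{P'}[\Psi_{t+1}(0)]-\varepsilon.$$
Taking the infimum over $P\in\fP$ and letting $\varepsilon\downarrow 0$, Assumption~\ref{ass:Psi-MultiP}(4) applied to $\Psi_{t+1}$ closes the argument.

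The main obstacle is the measurable $\varepsilon$-selection step in Part 1: one must check that the integration map $(\omega^t,P')\mapsto E^{P'}[\Psi_{t+1}(\omega^t\otimes_t\cdot,0)]$ inherits lower-semianalyticity from $\Psi_{t+1}$, and that the selector $\hat{P}_t$ can be spliced with arbitrary later selectors to yield a genuine element of $\fP$. Both points are ensured by the standing lower-semianalyticity hypothesis on $\Psi_{t+1}$ via Assumption~\ref{ass:Psi-MultiP}(3) and by the analyticity of $\mathrm{graph}(\fP_s)$ for every $s$. All other manipulations are routine: the upper bound $\Psi_{t+1}\leq C$ makes every expectation and limit interchange well-defined.
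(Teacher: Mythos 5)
Your overall strategy coincides with the paper's: Part~2 is obtained by applying the one-period computation of Lemma~\ref{le:psi-phi-infty} pointwise in $\omega^t$ with $\fP$ replaced by $\fP_t(\omega^t)$, and Part~1 is reduced to Assumption~\ref{ass:Psi-MultiP}(4) for $\Psi_{t+1}$ via a Jankov--von Neumann $\varepsilon$-optimal measurable selector from \cite[Proposition~7.50]{BertsekasShreve.78} and Fubini. (The paper proves the statement for an arbitrary $H^{t+1}$ with $\inf_{P\in\fP}E^P[\Psi_{t+1}(H^{t+1})]>-\infty$; specializing to $H^{t+1}=0$ as you do is exactly what Assumption~\ref{ass:Psi-MultiP}(4) requires.)

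There is, however, one technical gap in your Part~1. You state the selection inequality as
$E^{\hat P_t(\omega^t)}[\Psi_{t+1}(\omega^t\otimes_t\cdot,0)]\leq \Phi_t(\omega^t,0)+\varepsilon$
for all $\omega^t$, but this is not what \cite[Proposition~7.50]{BertsekasShreve.78} delivers: on the set $\{\omega^t : \Phi_t(\omega^t,0)=-\infty\}$ (which you cannot yet exclude, since ruling it out is essentially the content of Part~1) the proposition only guarantees $E^{\hat P_t(\omega^t)}[\Psi_{t+1}(\omega^t\otimes_t\cdot,0)]\leq -\varepsilon^{-1}$, not $-\infty$. Consequently the first inequality in your displayed chain, $E^{P^t}[\Phi_t(\cdot,0)]\geq E^{P^t}[E^{\hat P_t}[\Psi_{t+1}(\cdot,0)]]-\varepsilon$, is unjustified as written. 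The paper repairs exactly this point by working with the truncation $(-\varepsilon^{-1})\vee\Phi_t(\cdot,0)$, for which the pointwise bound $E^{\hat P_t(\omega^t)}[\Psi_{t+1}(\omega^t\otimes_t\cdot,0)]-\varepsilon\leq(-\varepsilon^{-1})\vee\Phi_t(\omega^t,0)$ does hold everywhere; integrating gives $E^{P^t}[(-\varepsilon^{-1})\vee\Phi_t(\cdot,0)]\geq\inf_{P'\in\fP}E^{P'}[\Psi_{t+1}(0)]-\varepsilon$, and one then lets $\varepsilon\downarrow 0$ using Fatou's lemma (the integrands are uniformly bounded above by $0\vee C$ and decrease to $\Phi_t(\cdot,0)$) to conclude $E^P[\Phi_t(\cdot,0)]\geq\inf_{P'\in\fP}E^{P'}[\Psi_{t+1}(0)]>-\infty$. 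With this truncation-plus-Fatou step inserted, your argument matches the paper's proof.
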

\begin{proof}
For the first part, 
%
let $H^{t+1}\in \cH^{t+1}$ such that
\begin{equation*}
\inf_{P \in \fP} E^P\big[\Psi_{t+1}(H^{t+1})]>-\infty.
\end{equation*}
We claim that also
\begin{equation*}
\inf_{P \in \fP} E^P\big[\Phi_{t}(H^{t+1})]>-\infty.
\end{equation*} 
To see that, note that by 
 \cite[Proposition~7.50]{BertsekasShreve.78}, there exists for any $\varepsilon>0$ an universally measurable kernel $P^\varepsilon_t:\Omega^t \to \fM_1(\Omega_1)$ such that $P^\varepsilon_t(\omega^t) \in \fP_t(\omega^t)$ for all $\omega^t \in \Omega^t$ and 
\begin{equation*}
E^{P^\varepsilon_t(\omega^t)}[\Psi_{t+1}(\omega^t\otimes_t\cdot, H^{t+1}(\omega^t))]\leq
 \begin{cases}
    \Phi_{t}(\omega^t,H^{t+1}(\omega^t)) + \eps & {\rm{if }}\ \Phi_{t}(\omega^t,H^{t+1}(\omega^t))>-\infty, \\
    -\eps^{-1} & {\rm otherwise.}
  \end{cases}
\end{equation*}
Then, we have
\begin{align*}
\ E^{P^\varepsilon_t(\omega^t)}[\Psi_{t+1}(\omega^t\otimes_t\,\cdot, H^{t+1}(\omega^t))]-\varepsilon
\leq  \ (-\varepsilon^{-1})\vee \Phi_{t}(\omega^t,H^{t+1}(\omega^t)).
\end{align*}
Take any $P \in \fP$ and denote its restriction to $\Omega^t$ by $P^t$. Integrating the above inequality yields
\begin{align*}
E^{P^t}[(-\varepsilon^{-1})\vee \Phi_{t}(H^{t+1})]
\geq
E^{P^t\otimes P^\varepsilon_t}[\Psi_{t+1}( H^{t+1})]-\varepsilon 
\geq
\inf_{P'\in \fP}E^{P'}[\Psi_{t+1}( H^{t+1})]-\varepsilon.
\end{align*}
Letting $\varepsilon \to 0$, we obtain, by Fatou's Lemma, that
\begin{equation*}
E^P[ \Phi_{t}(H^{t+1})] \geq \inf_{P'\in \fP}E^{P'}[\Psi_{t+1}( H^{t+1})]>-\infty.
\end{equation*}
Hence by the arbitrariness of $P \in \fP$, the claim is proven and so $\Phi_{t}$ satisfies Assumption~\ref{ass:Psi-MultiP}(4).  

The second part follows by  
%
 Lemma~\ref{le:psi-phi-infty}.
\end{proof}
%
%
%
%
We continue with the proof of Proposition~\ref{Psi-DPP-Ass-Multi-P-OK}.
\begin{proof}[Proof of Proposition~\ref{Psi-DPP-Ass-Multi-P-OK}]
	%
	We argue backwards using the recursion defined in \eqref{eq:Multi-P-Recursion}.
	The numbering refers to the corresponding numbering in Assumption~\ref{ass:Psi-MultiP}.
	\begin{enumerate}
		\item[{\bf(1)}] Since $\Psi=\Psi_T$ satisfies Assumption~\ref{ass:Psi-MultiP}(1) with respect to some set $\cD\subseteq \R^{dT}$ satisfying the grid-condition~\eqref{eq:grid-condition}, we get directly from its definition that both $\Phi_t$ and $\Psi_t$ are equal to $-\infty$ outside of $\cD^{t+1}$ and $\cD^t$, respectively. 
		Indeed,  if $\Psi_{t+1}$ equals to $-\infty$ outside of  $\cD^{t+1}$, then also does $\Phi_t$ by its definition. Moreover, if $x^t\not \in \cD^t$ then for all $\tilde{x} \in \R^d$ we have that $(x^t,\tilde{x})\notin \cD^{t+1}$ and hence we obtain that also  $\Psi_t$ is equal to $-\infty$ outside of $\cD^t$.
		As a consequence, as also each $\cD^t$ satisfies the grid-condition~\eqref{eq:grid-condition}, the maps $x \mapsto \Psi_t(\omega,x)$ and $x \mapsto \Phi_t(\omega,x)$ are upper-semicontinuous; see Remark~\ref{rem:Psi-usc}.
		\item[{\bf(2)}] Since $\Psi=\Psi_T$ is bounded from above by a constant $C$, the same holds true for $\Phi_t$ and $\Psi_t$ by definition, with respect to the same constant $C$.
		\item[{\bf(3)}] One proves that $\Phi_t$ is lower semianalytic if $\Psi_{t+1}$ is in exactly the same way as was done in
		\cite[Lemma~5.9]{NeufeldSikic.16}; hence 
		the proof is omitted. Assume now that $\Phi_t$ is lower semianalytic; we will argue that then also $\Psi_t$ is. Denote by $\cD_{t+1}$ 
		the projection of $\cD^{t+1}$ onto its last component, i.e. the projection of $\cD$ onto its $t+1$ component. Then, as $\cD^{t+1}$ is at most countable, the same holds true for $\cD_{t+1}$. As by {\bf(1)}, $\Phi_t$ equals to $-\infty$ outside of $\cD^{t+1}$, we have for any $x^t\in \R^{dt}$ that
		\begin{equation*}
		\Psi_t(\omega,x^t):=\sup_{x\in \R^d} \Phi_t(\omega,x^t,x)=\sup_{x\in \cD_{t+1}} \Phi_t(\omega,x^t,x).
		\end{equation*}
		Therefore, being a countable supremum over lower semianalytic functions,  the map $(\omega,x^t) \mapsto \Psi_t(\omega,x^t)$ is lower semianalytic. Note that it is here where we use the grid-condition in a significant way; see also Remark~\ref{rem:meas-1-per}.
		\item[{\bf(4)}] In Lemma~\ref{le:ass-psi-phi-multiP}, we show that $\Phi_t$ satisfies Assumption~\ref{ass:Psi-MultiP}(4) if $\Psi_{t+1}$ does. Then, by definition, this implies that also $\Psi_{t}$ satisfies Assumption~\ref{ass:Psi-MultiP}(4).
	\end{enumerate}	
\end{proof}
%
The following lemma proves that  for each  $t \in \{0,\dots,T-1\}$, both $\Psi_{t+1}$ and $\Phi_t$ are normal integrands. This is the key property needed in the measurable selection arguments within the proof of Proposition~\ref{prop:Psi-MultiP-OneP}, see also Remark~\ref{rem:normal}. 
\begin{lemma}\label{le:normal}
Let $\cD\subseteq \R^m$ be a set satisfying the grid-condition \eqref{eq:grid-condition}, 
 $f:\Omega^n\times\R^{m}\to \R\cup\{-\infty\}$ be a function  such that $(\omega,x)\mapsto f(\omega,x)$ is universally measurable, and for all $\omega\in \Omega^n$
 $\dom f(\omega,\cdot)\subseteq \cD$. Then $f$ is a $\cF_n$-normal integrand. In particular, the map $x\mapsto f(\omega,x)$ is upper-semicontinuous for every $\omega\in \Omega^n$. 
\end{lemma}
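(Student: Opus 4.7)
The plan is to unpack the grid-condition to see that $\cD$ is a closed, discrete subset of $\R^m$, then deduce upper-semicontinuity of each slice $f(\omega,\cdot)$ essentially for free, and finally combine this with the assumed joint measurability of $f$ to invoke a standard normal-integrand criterion from~\cite{RockafellarWets}.

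First I would let $\varepsilon>0$ denote the strictly positive infimum appearing in the grid-condition~\eqref{eq:grid-condition}. For any two distinct $x,y\in\cD$ there is some block index $i$ with $x_i\neq y_i$, and then $|x-y|\geq|x_i-y_i|\geq\varepsilon$. Hence $\cD$ is $\varepsilon$-separated, and in particular closed and discrete in $\R^m$.

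Next I would deduce upper-semicontinuity of $x\mapsto f(\omega,x)$ for each fixed $\omega\in\Omega^n$. For $x_0\in\cD$, the ball $B(x_0,\varepsilon/2)$ meets $\cD$ only in $\{x_0\}$, so $f(\omega,\cdot)\equiv-\infty$ on $B(x_0,\varepsilon/2)\setminus\{x_0\}$ (since $\dom f(\omega,\cdot)\subseteq\cD$), giving $\limsup_{x\to x_0}f(\omega,x)\leq f(\omega,x_0)$. For $x_0\notin\cD$, closedness of $\cD$ supplies a neighborhood of $x_0$ on which $f(\omega,\cdot)\equiv-\infty$, so the limsup is $-\infty=f(\omega,x_0)$. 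In particular each hypograph $\hypo f(\omega)$ is closed.

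The final and main step is to verify that the correspondence $\omega\mapsto\hypo f(\omega)$ is $\cF_n$-measurable as a set-valued map. For this I would invoke the standard characterization in~\cite{RockafellarWets} --- the hypograph analogue of, e.g., Corollary~14.34 / Proposition~14.39, applied to $-f$ in view of Remark~\ref{rem:normal-differ} --- namely, that for an extended real-valued function on $\Omega^n\times\R^m$ whose fibers are USC, joint $\cF_n\otimes\cB(\R^m)$-measurability is equivalent to $\hypo f$ being a closed-valued measurable correspondence. Joint universal measurability, together with universal completeness of $\cF_n$, supplies the required product measurability. The main (and really only) obstacle I anticipate is this last translation from pointwise joint measurability of $f$ to set-valued measurability of $\hypo f$; everything preceding it is bookkeeping driven by the discreteness of $\cD$.
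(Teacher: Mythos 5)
Your first two steps are correct and match the paper: the grid-condition makes $\cD$ an $\varepsilon$-separated (hence closed, discrete, and in fact countable) set, and from $\dom f(\omega,\cdot)\subseteq\cD$ one gets upper-semicontinuity of each slice, i.e.\ closedness of $\hypo f(\omega)$, exactly as in the paper's eventually-constant-sequence argument. The gap is in your final step. The criterion you want to cite --- that for a function with USC fibers, joint measurability is \emph{equivalent} to $\hypo f$ being a closed-valued measurable correspondence --- is not an unconditional theorem: the direction you need (from joint measurability of the graph to measurability of the correspondence) is proved in \cite{RockafellarWets} via the measurable projection theorem and hence under the hypothesis that the $\sigma$-field is complete with respect to a $\sigma$-finite measure. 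The universal completion $\cF_n$ is an intersection of completions, not a completion, so at best you would have to rerun the argument in each $\cB(\Omega^n)^P$ and intersect. More seriously, your only input is that $f$ is universally measurable on the product Polish space $\Omega^n\times\R^m$, and this does \emph{not} supply membership of $\{(\omega,x,y)\,|\,f(\omega,x)\geq y\}$ in the product $\sigma$-field $\cF_n\otimes\cB(\R^{m+1})$ that any graph-measurability criterion takes as input: the universal $\sigma$-field of a product is in general strictly larger than the product of the universal $\sigma$-fields, and projections of universally measurable sets need not be universally measurable. So the sentence ``joint universal measurability \ldots supplies the required product measurability'' is precisely where the argument breaks.

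The telling sign is that your measurability step makes no use of the grid-condition, whereas that is exactly where the paper needs it. The paper's route is: since $f(\omega,\cdot)$ is USC, by \cite[Proposition~14.40]{RockafellarWets} it suffices to show that $\omega\mapsto\sup_{x\in K}f(\omega,x)$ is $\cF_n$-measurable for every compact $K\subseteq\R^m$; by the grid-condition $\cD\cap K$ is \emph{finite}, so this supremum is a finite maximum of the sections $\omega\mapsto f(\omega,k_i)$, each universally measurable by \cite[Lemma~7.29]{BertsekasShreve.78}. Only sectionwise measurability of $f$ is ever used. If you prefer to stay with the set-valued picture, you could instead exploit countability of $\cD$ and write $\hypo f(\omega)=\bigcup_{k\in\cD}\{k\}\times(-\infty,f(\omega,k)]$, verifying measurability of the correspondence from this countable decomposition; either way, the discreteness of $\cD$ has to enter the measurability argument itself, not only the upper-semicontinuity argument.
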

\begin{proof}
By the definition of being a $\cF_n$-normal integrand, we need to show that its $\mbox{hypo} f(\omega)$ is closed-valued and $\cF_n$-measurable in the sense of \cite[Definition~14.1,p.643]{RockafellarWets}.

To show that the $\mbox{hypo} f(\omega)$ is closed-valued, fix any $\omega \in \Omega^n$  and let $(x^k,\alpha^k)\subseteq \R^{m}\times \R$ converging to some $(x,\alpha)$ satisfying for each $k$ that $f(\omega,x^k)\geq \alpha^k$. We need to show that $f(\omega,x)\geq \alpha$. Observe that by the assumption $\dom f(\omega,\cdot)\subseteq \cD$, we have $(x^k)\subseteq \cD$. Therefore, by the grid-condition \eqref{eq:grid-condition} of the set $\cD$, the sequence $(x^k)$ is constant equal to $x$ for eventually all $k$. In particular, for large enough $k$, we have $f(\omega,x)\geq \alpha^k$, which then implies the desired inequality $f(\omega,x)\geq \alpha$.

To see that the $\mbox{hypo} f(\omega)$ is a $\cF_n$-measurable correspondence, notice first that by the assumption $\dom f(\omega,\cdot)\subseteq \cD$ and $\cD$ satisfies the grid-condition \eqref{eq:grid-condition}, the map $x\mapsto f(\omega,x)$ is upper-semicontinuous for every $\omega$. Hence, we deduce from \cite[Proposition~14.40, p.667]{RockafellarWets} that it suffices to show that for any $K\subset \R^m$ being compact, the function
\begin{equation*}
g(\omega):=\sup_{x \in K} f(\omega,x)
\end{equation*}
is $\cF_n$-measurable. To that end, fix any $K\subseteq \R^m$ being compact and then fix any constant $c \in \R$. We need to show that the set $\{\omega\,| \, g(\omega)> c\} \in \cF_n$. As $\dom f(\omega,\cdot)\subseteq \cD$, this is trivially satisfied if $\cD \cap K =\emptyset$. Therefore, from now on, assume that
$\cD \cap K \neq\emptyset$. By compactness of $K$ and the grid-condition \eqref{eq:grid-condition} of $\cD$, we get that $\cD \cap K=\{k_1,...,k_j\}$ is finite. Therefore, by definition of $g(\omega)$, we have
\begin{align*}
\{\omega\,| \, g(\omega)> c\}= \bigcup_{i=1}^j \{\omega\,|f(\omega,k_i)>c\},
\end{align*}
which is $\cF_n$-measurable by \cite[Lemma~7.29, p.174]{BertsekasShreve.78}.
\end{proof}

Before we can start 
with the proof of Proposition~\ref{prop:local-NA}, we need to see that the set valued map $K_t(\omega^t)$ has some desirable properties.
\begin{lemma}\label{le:K-t-local-nice}
  Let $t \in \{0,\dots,T-1\}$. Assume that $\Psi_{t+1}$ satisfy Assumption~\ref{ass:Psi-MultiP}. Then the set-valued map $K_t$ defined in \eqref{eq:def-K-t-local} is a closed-valued,
   $\cF_t$-measurable correspondence and the set $\big\{\omega^t \in \Omega^t \, \big|\, K_t(\omega^t)=\{0\} \big\}\in\cF_t$.
\end{lemma}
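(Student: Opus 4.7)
The plan is to recognize $K_t$ as the $0$-super-level correspondence of a normal integrand constructed from $\Phi_t^\infty$, and then read off the measurability claims from standard results in \cite{RockafellarWets}. First, I would invoke Proposition~\ref{Psi-DPP-Ass-Multi-P-OK} to see that $\Phi_t$ satisfies Assumption~\ref{ass:Psi-MultiP}, so Lemma~\ref{le:normal} makes $\Phi_t$ an $\cF_t$-normal integrand, and then \cite[Exercise~14.54(a)]{RockafellarWets} gives that the horizon function $\Phi_t^\infty$ is also an $\cF_t$-normal integrand. Setting
\[
g(\omega^t, h) := \Phi_t^\infty(\omega^t, 0, \ldots, 0, h),
\]
I would verify that $g$ is itself an $\cF_t$-normal integrand: $g(\omega^t,\cdot)$ is upper-semicontinuous as the restriction of the usc function $\Phi_t^\infty(\omega^t,\cdot)$ to an affine subspace, and for each compact $K\subseteq\R^d$ the function $\omega^t\mapsto \sup_{h\in K} g(\omega^t,h)$ equals $\omega^t\mapsto\sup_{x\in \{0\}\times\cdots\times\{0\}\times K}\Phi_t^\infty(\omega^t,x)$, which is $\cF_t$-measurable by \cite[Proposition~14.40]{RockafellarWets} applied to $\Phi_t^\infty$; this is exactly what is needed to conclude $g$ is a normal integrand.

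Closedness of $K_t(\omega^t) = \{h \mid g(\omega^t,h)\geq 0\}$ then follows immediately from usc of $g(\omega^t,\cdot)$. For $\cF_t$-measurability of the correspondence, I would use the standard criterion that $\{\omega^t \mid K_t(\omega^t) \cap O \neq \emptyset\} \in \cF_t$ for every open $O\subseteq\R^d$. Exhausting $O$ by an increasing sequence of compact sets $K_n$ and using that an usc function attains its supremum on a compact set, this set equals $\bigcup_n \{\omega^t \mid \sup_{h \in K_n} g(\omega^t,h) \geq 0\}$, and each summand lies in $\cF_t$ by \cite[Proposition~14.40]{RockafellarWets}.

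For the final claim, I would exploit positive homogeneity of $\Phi_t^\infty$, which descends to $g(\omega^t,\cdot)$ and makes $K_t(\omega^t)\setminus\{0\}$ a positive cone. Hence $K_t(\omega^t)\setminus\{0\}\neq\emptyset$ iff $K_t(\omega^t)\cap S^{d-1}\neq\emptyset$, which (since $g(\omega^t,\cdot)$ is usc and $S^{d-1}$ compact, so the sup is attained) is equivalent to $\sup_{h\in S^{d-1}} g(\omega^t,h)\geq 0$. Thus
\[
\{\omega^t \mid K_t(\omega^t)=\{0\}\} = \{\omega^t \mid g(\omega^t,0)\geq 0\} \cap \{\omega^t \mid \sup_{h\in S^{d-1}} g(\omega^t,h)<0\},
\]
and both factors lie in $\cF_t$: the first by joint measurability of $g$ (the section $g(\cdot,0)$ is measurable), the second by another application of \cite[Proposition~14.40]{RockafellarWets} to the compact set $S^{d-1}$.

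The hard part will be the bookkeeping around the normal-integrand status of $g$ after the spatial restriction; once that is in place, everything else is essentially a direct application of Proposition~14.40, combined with the cone structure of $K_t(\omega^t)$ coming from positive homogeneity to reduce the final measurability statement to a single compact-set supremum.
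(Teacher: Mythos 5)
Your proof is correct, and its skeleton coincides with the paper's: both rest on the chain ``$\Phi_t$ is an $\cF_t$-normal integrand by Proposition~\ref{Psi-DPP-Ass-Multi-P-OK} and Lemma~\ref{le:normal}, hence so is $\Phi_t^\infty$, and $K_t$ is the $0$-superlevel correspondence of its restriction to the slice $(0,\dots,0,h)$.'' You differ in the execution. For measurability of the correspondence the paper just cites \cite[Proposition~14.33]{RockafellarWets} (superlevel sets of normal integrands are closed-valued measurable correspondences) together with \cite[Proposition~14.45(a)]{RockafellarWets} for the restriction to the slice, whereas you re-derive this by hand through the open-set criterion, exhausting open sets by compacts and invoking \cite[Proposition~14.40]{RockafellarWets}; this is more work but sound, the attainment of suprema of usc functions on compacts being exactly what makes the equivalences go through. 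The genuine divergence is in the last claim: the paper extracts a Castaing representation $\{x_n\}$ of $K_t$ via \cite[Theorem~14.5(a)]{RockafellarWets} and writes $\{K_t=\{0\}\}=\bigcap_n\{x_n=0\}$, while you exploit positive homogeneity of $\Phi_t^\infty$ to reduce ``$K_t(\omega^t)$ contains a nonzero point'' to the single compact-set condition $\sup_{h\in S^{d-1}}g(\omega^t,h)\geq 0$. Your version is more elementary and has the small advantage of treating explicitly the $\omega^t$ with $K_t(\omega^t)=\emptyset$ (excluded by your first factor $\{g(\cdot,0)\geq 0\}$), a case the paper's displayed identity glosses over since the Castaing selectors are only defined on $\dom K_t$. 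Be aware, though, that the paper reuses the Castaing representation and its identity \eqref{Castaing-Report1} in the proof of Proposition~\ref{prop:local-NA}; if you adopt your route you would need a short substitute there, e.g.\ a measurable selector of $K_t\cap S^{d-1}$ on the set where it is nonempty.
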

\begin{proof}
  As $\Phi_{t}^\infty$
  is positively homogeneous and upper-semicontinuous in $x^{t+1}$, $K_t(\omega^t)$ is a closed-valued cone for every $\omega^t$. We emphasize that as $\Psi$ is not assumed to be concave, $K_t(\omega^t)$ is not necessarily convex.
  %
  %
By Lemma~\ref{le:normal} and Proposition~\ref{Psi-DPP-Ass-Multi-P-OK}, 
   $\Phi_t$ is a $\cF_t$-normal integrand, hence so is $\Phi_t^\infty$.
  Thus, the set-valued map $K_t$ is an $\cF_t$-measurable correspondence; see  \cite[Proposition~14.33, p.663]{RockafellarWets} and \cite[Proposition~14.45(a), p.669]{RockafellarWets}. 
  By \cite[Theorem~14.5(a), p.646]{RockafellarWets}, $K_t$ admits a Castaing representation $\{x_n\}$.
Thus, 
  \begin{align}\label{Castaing-Report1}
\big\{\omega^t \in \Omega^t \, \big|\, K_t(\omega^t)=\{0\} \big\} =  \bigcap_{n \in \N} \{\omega^t \in \Omega^t\,|\, x_n(\omega^t)=0\} \in \cF_t.
  \end{align}
\end{proof}
\begin{lemma}\label{le:-local-K-global-K-relation}
Let $t\in\{0,\ldots,T-1\}$. Assume that $\Psi_{t+1}$ satisfies Assumption~\ref{ass:Psi-MultiP}. Moreover, let $X:\Omega^t \to \R^d$ be any $\cF_t$-measurable random variable. Then the following holds true.
\begin{equation*}
X(\omega^t) \in K_t(\omega^t) \ \ \fP\mbox{-q.a. }\omega^t \in \Omega^t   \quad \Longrightarrow  \quad (0,\dots,0,X)\in \cK^{t+1}
\end{equation*}
\end{lemma}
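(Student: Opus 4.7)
The plan is to chain together the bound $\Phi_t^\infty\leq 0$ with the sandwich inequality from Lemma~\ref{le:ass-psi-phi-multiP}(2), and then disintegrate via Fubini. I begin by observing that the hypothesis $X(\omega^t)\in K_t(\omega^t)$ means $\Phi_t^\infty(\omega^t,0,\ldots,0,X(\omega^t))\geq 0$, while Lemma~\ref{le:ass-psi-phi-multiP}(2) guarantees $\Phi_t^\infty\leq 0$. Hence $\Phi_t^\infty(\omega^t,0,\ldots,0,X(\omega^t))=0$ for $\fP$-q.a.\ $\omega^t\in\Omega^t$.

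Next, I invoke the second inequality from Lemma~\ref{le:ass-psi-phi-multiP}(2):
\begin{equation*}
0=\Phi_t^\infty(\omega^t,0,\ldots,0,X(\omega^t))\leq \inf_{P\in\fP_t(\omega^t)} E^P\bigl[\Psi_{t+1}^\infty(\omega^t\otimes_t\cdot,0,\ldots,0,X(\omega^t))\bigr]\leq 0,
\end{equation*}
so the infimum is exactly $0$ for $\fP$-q.a.\ $\omega^t$. Since $\Psi_{t+1}^\infty\leq 0$ by Lemma~\ref{le:ass-psi-phi-multiP}(2), the individual integrals $E^P[\Psi_{t+1}^\infty(\omega^t\otimes_t\cdot,0,\ldots,0,X(\omega^t))]$ are all nonpositive, and the infimum being $0$ forces \emph{each} of them to equal $0$ for every $P\in\fP_t(\omega^t)$. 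Again using $\Psi_{t+1}^\infty\leq 0$, this in turn forces $\Psi_{t+1}^\infty(\omega^t\otimes_t\omega_{t+1},0,\ldots,0,X(\omega^t))=0$ for $P$-a.e.\ $\omega_{t+1}$, for every $P\in\fP_t(\omega^t)$, for $\fP$-q.a.\ $\omega^t$.

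The final step is a Fubini-type disintegration. Fix an arbitrary $\bar P=\bar P_0\otimes\cdots\otimes\bar P_{T-1}\in\fP$; its restriction to $\Omega^{t+1}$ is $\bar P^{t+1}=\bar P^t\otimes\bar P_t$, where $\bar P_t(\omega^t)\in\fP_t(\omega^t)$ for every $\omega^t$. By the previous step, for $\bar P^t$-a.e.\ $\omega^t$, the measure $\bar P_t(\omega^t)$ assigns full mass to $\{\omega_{t+1}:\Psi_{t+1}^\infty(\omega^t\otimes_t\omega_{t+1},0,\ldots,0,X(\omega^t))=0\}$. Integrating against $\bar P^t$ yields $\Psi_{t+1}^\infty(\cdot,0,\ldots,0,X)=0$ on a set of full $\bar P^{t+1}$-measure, and hence of full $\bar P$-measure when lifted to $\Omega$. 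As $\bar P\in\fP$ was arbitrary, $\Psi_{t+1}^\infty(0,\ldots,0,X)\geq 0$ $\fP$-q.s. Combined with the $\cF_t$-measurability of $X$, which makes $(0,\ldots,0,X)\in\cH^{t+1}$, this places $(0,\ldots,0,X)$ in $\cK^{t+1}$.

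The only point requiring care is the passage from ``infimum equals $0$'' to ``each integral equals $0$'': this uses in an essential way that $\Psi_{t+1}^\infty\leq 0$ everywhere, without which one could not upgrade the robust statement to a per-measure statement. The rest is a routine product-measure decomposition.
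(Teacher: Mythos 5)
Your proof is correct and follows essentially the same route as the paper: both rest on the sandwich inequality of Lemma~\ref{le:ass-psi-phi-multiP}, the global bound $\Psi_{t+1}^\infty\leq 0$, and the product decomposition $P^{t+1}=P^t\otimes P_t$ of measures in $\fP$. The only (cosmetic) difference is the order of operations — the paper integrates the chain of inequalities to get $E^P[\Psi_{t+1}^\infty(0,\dots,0,X)]\geq 0$ and then invokes nonpositivity of the integrand, whereas you first upgrade to pointwise equalities under each kernel and then integrate.
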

\begin{proof}
Let $X \in K_t \ \fP$-q.s. Then for any arbitrary $P \in \fP$, recall that its restriction $P^{t+1}$ to $\Omega^{t+1}$ is of the form $P^t\otimes P_t$ for some selector $P_t \in \fP_t$. Therefore, by Lemma~\ref{le:ass-psi-phi-multiP}
\begin{align*}
E^P[\Psi^\infty_{t+1}(0,\dots,0,X)]&=E^{P^t(d\omega^t)}\Big[
E^{P_t(\omega^t)}\big[\Psi^\infty_{t+1}\big(\omega^t\otimes_t\cdot,0,\dots,0,X(\omega^t)\big)\big]\Big]\\
&\geq E^{P^t(d\omega^t)}\big[
\Phi^\infty_{t}\big(\omega^t,0,\dots,0,X(\omega^t)\big)\big]\\
&\geq 0.
\end{align*}
By the arbitrariness of $P \in \fP$, we conclude that $(0,\dots,0,X)\in \cK^{t+1}$.
\end{proof}
%
%
%
%
%
%
Now we are able to prove Proposition~\ref{prop:local-NA}.
\begin{proof}[Proof of Proposition~\ref{prop:local-NA}]
Recall from the proof of Lemma~\ref{le:K-t-local-nice} that $K_t$ admits a Castaing representation $\{x_n\}$. Then using Lemma~\ref{le:-local-K-global-K-relation} and \eqref{Castaing-Report1},  we have that
  \begin{align*}
  & \ \mbox{$\mbox{NA}(\fP)^{t+1}$ up to time $t+1$ holds true}\\
 \Longrightarrow & \ \mbox{ $\forall n \in \N$: } (0,\dots,0,x_n)=0 \ \ \fP\mbox{-q.s.}\\
           \Longleftrightarrow &\ \mbox{ the set }
              \bigcap_{n \in \N} \{\omega^t \in \Omega^t\,|\, x_n(\omega^t)=0\} \mbox{ has $\fP$-quasi full measure}\\
               \Longleftrightarrow &\ \mbox{ $\mbox{NA}_t$ holds true.}
  \end{align*}
\end{proof}
%
%
\begin{proof}[Proof of Proposition~\ref{prop:Psi-MultiP-OneP}]
By Lemma~\ref{le:normal}, we have that $\Phi_t$ is an $\cF_t$-normal integrand. 
  	Then, for any fixed strategy $H^t \in \cH^t$, \cite[Proposition~14.45(c), p.669]{RockafellarWets} gives that the mapping $\Phi^{H^t}(\omega^t,x):=\Phi_t(\omega^t,H^t(\omega^{t-1}),x)$ is a $\cF_t$-normal integrand, too. Therefore, we deduce from \cite[Theorem~14.37, p.664]{RockafellarWets} that the set-valued mapping $\Upsilon\colon\Omega^t \rightrightarrows \R^d$ defined by
  	\begin{equation*}
  	\Upsilon(\omega^t):=\mathrm{argmax}\ \Phi^{H^t}(\omega^t,\cdot)
  	\end{equation*}
  	admits an $\cF_t$-measurable selector $\widehat{h}_t$ on the  $\cF_t$-measurable set $\{\Upsilon\neq \emptyset\}$. Extend $\widehat{h}_t$ by setting $\widehat{h}_t=0$ on $\{\Upsilon=\emptyset\}$. It remains to argue that $\{\Upsilon=\emptyset\}$ is a $\fP$-polar set.
  	
  	 To that end, we want to show that the map $(x^t,x_t) \mapsto \Phi_t(\omega^t,x^{t},x_t)$ satisfies the conditions of Proposition~\ref{prop:key-argument-RW}. By Lemma~\ref{Psi-DPP-Ass-Multi-P-OK}, it is proper and upper-semicontinuous; see also Remark~\ref{rem:Psi-usc}.
  	 Moreover, as the local no-arbitrage condition $\mbox{NA}_t$ holds true by Proposition~\ref{prop:local-NA}, we have for $\fP$-q.a. $\omega^t \in \Omega^t$ that
  	 \begin{align*}
  	\cK(\omega^t) :=\big\{h \in \R^d\,\big|\, \Phi_t^\infty(\omega^t,0,\dots,0,h)\geq 0\big\}
  	= K_t(\omega^t)
  	=\{0\}. 
  	 \end{align*}
  	  Hence the conditions of Proposition~\ref{prop:key-argument-RW} are indeed satisfied. 
  	Therefore, we conclude from Proposition~\ref{prop:key-argument-RW}, as by assumption $\sup_{x\in \R^d}\Phi_t(\omega^t,H^t(\omega^{t-1}),x)>-\infty \ \fP\mbox{-q.s.}$, that  $\{\Upsilon=\emptyset\}$ is a $\fP$-polar set.  The  result now follows.
\end{proof}
%
%
Now we continue with the proof of Proposition~\ref{prop:no-arbitrage-up-time-t-MultiP}.
%
%
%
%
\begin{proof}[Proof of Proposition~\ref{prop:no-arbitrage-up-time-t-MultiP}]
Assume by contradiction that $\cK^{t}\neq\{0\}$. Then, there exists a probability measure in $P \in \fP$ 
 and $\widetilde{H}^t\in \cH^t$ such that
\begin{equation}\label{Step1-1}
\Psi^{\infty}_t(\widetilde H^t)\geq 0 \quad\fP\mbox{-q.s.} \quad \ \mbox{ and } \ \quad P[\widetilde H^t\neq 0]>0.
\end{equation}
\textsc{Step 1:} We claim that there exists an $\cF_t$-measurable map $\widetilde{h}_t:\Omega^t \to \R^d$ such that
\begin{equation}\label{Step1-2}
\Phi^\infty_{t}(\widetilde{H}^t,\widetilde{h}_t)= \Psi^\infty_{t}(\widetilde{H}^t)\quad \fP\mbox{-q.s.}
\end{equation}
Indeed, first notice 
 that 
 $\Phi_t^\infty(0)\geq 0$ $\fP\mbox{-q.s.}$, in particular $\Phi_t^\infty$ is proper $\fP\mbox{-q.s.}$.  To see this, observe that by Lemma~\ref{le:ass-psi-phi-multiP}, $\Phi_t$ satisfies Assumption~\ref{ass:Psi-MultiP}(4), hence $\Phi_t(0)>-\infty$ $\fP\mbox{-q.s.}$. Therefore, we obtain  $\fP\mbox{-q.s.}$  that
 \begin{equation*}
 \Phi_t^\infty(0)= \lim_{n\to \infty} \sup_{\delta>n, \ |y|<\frac{1}{n}} \frac{1}{\delta} \Phi_t(\delta y)
 \geq
 \lim_{n\to \infty} \sup_{\delta>n} \frac{1}{\delta} \Phi_t(0)=0. 
 \end{equation*}
 
 Moreover, observe that $(\Phi_t^\infty)^\infty=\Phi_t^\infty$ by the property of a horizon function to be upper-semicontinuous and positively homogeneous; see \cite[p.87]{RockafellarWets}. Hence, as by Assumption $\mbox{NA}(\fP)^{t+1}$ up to time $t+1$ holds,  we know from Proposition~\ref{prop:local-NA} that 
the local no-arbitrage condition $\NA_t$ holds, too. Therefore, by the same arguments as in the proof of Proposition~\ref{prop:Psi-MultiP-OneP}, 
 the map $(x^t,x_t) \mapsto \Phi^\infty_t(x^t,x_t)$ satisfies the condition of Proposition~\ref{prop:key-argument-RW} $\fP\mbox{-q.s.}$. Since by assumption $\Psi^{\infty}_t(\widetilde H^t)\geq 0 \quad\fP\mbox{-q.s.}$, we conclude from Proposition~\ref{prop:key-argument-RW} 
  that the set-valued map
\begin{align*}
M(\omega^t):&=\big\{h \in \R^d \,\big| \,\Phi_{t}^\infty(\omega^t, \widetilde H^t,h)=\Psi_t^\infty(\omega^t, \widetilde H^t)\big\}\\
&=\big\{h \in \R^d \,\big| \,\Phi_{t}^\infty(\omega^t, \widetilde H^t,h)=\sup_{x\in \R^d} \Phi_{t}^\infty(\omega^t, \widetilde H^t,x)\big\}
\end{align*}
is not empty for $\fP$-q.e. $\omega^t$. 
Moreover, by Lemma~\ref{le:normal} and \cite[Exercise~14.54(a)p.673]{RockafellarWets}, the function $\Phi^\infty_t$ is a normal integrand. So, \cite[Theorem~14.37, p.664]{RockafellarWets} provides the existence of an $\cF_t$-measurable selector $\widetilde{h}_t$ of $M$.

\textsc{Step 2:} Let us show that $\widetilde{H}^{t+1}:=(\widetilde H^t,\widetilde h_t) \in \cH^{t+1}$ satisfies $\Psi^{\infty}_{t+1}(\widetilde H^{t+1})\geq 0 \ \fP\mbox{-q.s.}$, i.e. $\widetilde{H}^{t+1} \in \cK^{t+1}$.

For $\fP$-q.e. $\omega^t$ we have by \eqref{Step1-1}, \eqref{Step1-2}, and Lemma~\ref{le:ass-psi-phi-multiP} 
\begin{align*}
0= \Psi^{\infty}_t(\omega^t,\widetilde H^t(\omega^{t-1})) =\Phi^{\infty}_{t}(\omega^t,\widetilde H^{t+1}(\omega^t))\leq\inf_{P \in \fP_t(\omega^t)} E^P[\Psi^\infty_{t+1}(\omega^t\otimes_t \cdot,\widetilde H^{t+1}(\omega^t))]\leq 0,
\end{align*}
which in turn implies that for $\fP$-q.e. $\omega^t$ we have that
\begin{equation*}
0= \inf_{P \in \fP_t(\omega^t)} E^P[\Psi^\infty_{t+1}(\omega^t\otimes_t \cdot,\widetilde H^{t+1}(\omega^t))]
\end{equation*}
As every $P'\in\fP$ satisfies $P'|_{\Omega^{t+1}}=P'|_{\Omega^{t}}\otimes P'_t$ for some selection $P'_t\in\fP_t$, we obtain the result directly from Fubini's theorem.

But now, we have constructed $\widetilde{H}^{t+1} \in \cK^{t+1}$ with
\begin{equation*}
P[\widetilde{H}^{t+1}\neq 0]\geq P[\widetilde{H}^{t}\neq 0]>0,
\end{equation*}
which is a contradiction to $\mbox{NA}(\fP)^{t+1}$ up to time $t+1$. Thus, we must have $\cK^{t}=\{0\}$.
\end{proof}
%
%
%
%
\subsection{Proof of Theorem~\ref{thm:Maxim-exist-MultiP}}\label{subsec:Proof-main-thm}
The goal of this subsection is to give the proof of Theorem~\ref{thm:Maxim-exist-MultiP}, which is the main result of this paper. We will construct the optimal strategy $\widehat{H}:=(\widehat H_0,\dots,\widehat H_{T-1}) \in \cH$ recursively from time $t=0$ upwards by applying Proposition~\ref{prop:Psi-MultiP-OneP} at each time $t$, given the  restricted strategy $\widehat{H}^t:=(\widehat H_0,\dots,\widehat H_{t-1})\in \cH^t$. We follow \cite{Nutz.13util} to check that $\widehat{H}$ is indeed an optimizer of \eqref{eq:thm-optimal-MultiP}.
%
%
%
%
%
\begin{proof}[Proof of Theorem~\ref{thm:Maxim-exist-MultiP}]
First, let us check that the conditions in Theorem~\ref{thm:1-Per} are satisfied for the function $\Psi_1$. Indeed, by
Proposition~\ref{Psi-DPP-Ass-Multi-P-OK}  and Proposition~\ref{prop:no-arbitrage-up-time-t-MultiP} we know that  the no-arbitrage condition \eqref{eq:def-NA-1-Per} in the one-period-model holds true. 
Moreover, 
by Proposition~\ref{Psi-DPP-Ass-Multi-P-OK} we know that $\Psi_1$ satisfies Assumption~\ref{ass:Psi-MultiP}.
 This together with Lemma~\ref{le:normal} shows that $\Psi_1$ satisfies Assumption~\ref{ass:1-Per}. 
Hence by Theorem~\ref{thm:1-Per},
there exists $\widehat{H}_0\in \R^d$ such that
\begin{equation*}
\inf_{P_\in \fP_0} E^P[\Psi_1(\widehat{H}_0)]=\sup_{x\in \R^d} \inf_{P_\in \fP_0} E^P[\Psi_1(x)]>-\infty.
\end{equation*}
Now, we claim that for each $t\in \{0,\dots,T-1\}$ we have that
\begin{equation}\label{eq:induction}
\sup_{x\in \R^d}\Phi_t(\omega^t,\widehat H^t(\omega^{t-1}),x)>-\infty \quad \fP\mbox{-q.s.,}
\end{equation}
where $\widehat{H}^t:=(\widehat H_0,\dots,\widehat H_{t-1})$ is recursively defined from time $t=0$ upwards. We argue by induction. Indeed, the base case $t=0$ we just argued above.  For the induction step, assume now that \eqref{eq:induction} holds true at time $t-1$. Then, by Proposition~\ref{prop:Psi-MultiP-OneP} there exists an $\mathcal{F}_{t-1}$-measurable random variable $\widehat{H}_{t-1}$ such that for $\fP$-q.e.\ $\omega^{t-1}$ it holds that
\begin{equation}\label{eq:induction-1}
-\infty<\sup_{x\in \R^d}\Phi_{t-1}(\omega^{t-1},\widehat H^{t-1}(\omega^{t-2}),x)= \Phi_{t-1}(\omega^{t-1},\widehat H^{t-1}(\omega^{t-2}),\widehat{H}_{t-1}(\omega^{t-1})).
\end{equation}
We write $\widehat{H}^t:=(\widehat H^{t-1},\widehat{H}_{t-1})\in  \mathcal{H}^t$. Notice that by definition of the recursion defined in \eqref{eq:Multi-P-Recursion}, we have for every $\omega^{t-1} \in \Omega^{t-1}$ that
\begin{equation*}
\Phi_{t-1}(\omega^{t-1},\widehat H^{t}(\omega^{t-1}))
=
\inf_{P \in \fP_{t-1}(\omega^{t-1})}E^P[\Psi_{t}(\omega^{t-1}\otimes_{t-1} \cdot,\widehat H^t(\omega^{t-1}))].
\end{equation*}
This and \eqref{eq:induction-1} ensure that 
\begin{equation}\label{eq:induction-2}
\mbox{for $\fP$-q.e.\ $\omega^{t-1}$ we have }\ \Psi_{t}(\omega^{t-1}\otimes_{t-1} \cdot,\widehat H^t(\omega^{t-1}))>-\infty \quad \fP_{t-1}(\omega^{t-1})\mbox{-q.s..}
\end{equation}
In addition, recall for every $P \in \fP$ that its restriction $P^{t}$ to $\Omega^t$ is of the form $P^{t-1}\otimes P_{t-1}$ for some selector $P_{t-1}\in \fP_{t-1}$. Therefore, \eqref{eq:induction-2} implies that for every $P \in \fP$
\begin{align*}
P\!\big[\Psi_{t}(\widehat H^t)>-\infty\big]
&=
E^{P^{t-1}(d\omega^{t-1})}\Big[E^{P_{t-1}(\omega^{t-1})}
\big[\mathbf{1}_{\{\Psi_{t}(\omega^{t-1}\otimes_{t-1} \cdot,\widehat H^t(\omega^{t-1}))>-\infty\}}\big]\Big]=1.
\end{align*}
Hence by definition of the recursion \eqref{eq:Multi-P-Recursion}, we see that indeed
\begin{equation*}
\sup_{x\in \R^d}\Phi_t(\omega^t,\widehat H^t(\omega^{t-1}),x)
=\Psi_{t}(\omega^t,\widehat H^t(\omega^{t-1}))
>-\infty \quad \mbox{for }\,\fP\mbox{-q.e. $\omega^t$.}
\end{equation*}
As a consequence, we can apply Proposition~\ref{prop:Psi-MultiP-OneP} to 
 find an $\cF_t$-measurable random variable $\widehat{H}_t$ such that
\begin{equation*}
\inf_{P \in \fP_t(\omega^t)}E^P[\Psi_{t+1}(\omega^t\otimes_t \cdot,\widehat H^t(\omega^{t-1}),\widehat{H}_{t}(\omega^t))]
=\Psi_t(\omega^t,\widehat H^t(\omega^{t-1}))
\end{equation*}
for $\fP$-quasi-every $\omega^t\in \Omega^t$, for all $t=1,\dots T-1$. We claim that $\widehat H\in\cH$ is optimal, i.e. satisfies \eqref{eq:thm-optimal-MultiP}.
We first show that
\begin{equation}\label{eq:pf-thm-claim1-MultiP}
\inf_{P\in \fP}E^P[\Psi_T(\widehat H)]\geq \Psi_0.
\end{equation}
To that end, let $t \in \{0,\dots,T-1\}$. Let $P \in \fP$; we write $P= P_0\otimes\dots\otimes P_{T-1}$ with kernels $\fP_s:\Omega^s\to\fM_1(\Omega_1)$  satisfying $P_s(\cdot)\in \fP_s(\cdot)$. Therefore, by applying Fubini's theorem and the definition of $\widehat H$
\begin{align*}
 \ E^P[&\Psi_{t+1}(\widehat{H}_0,\dots,\widehat{H}_{t})]\\[1ex]
%
%
&\geq  \ E^{(P_0\otimes\dots\otimes P_{t-1})(d\omega^t)}\Big[\inf_{P' \in \fP_t(\omega^t)}E^{P'}\big[\Psi_{t+1}(\omega^t\otimes_t\,\cdot,\widehat H^t(\omega^{t-1}),\widehat{H}_{t}(\omega^t))\big]\Big]\\[1ex]
&= \ E^{(P_0\otimes\dots\otimes P_{t-1})}[\Psi_t(\widehat H^t)]
= E^P[\Psi_t(\widehat H^t)].
\end{align*}
Using this inequality repeatedly from $t=T-1$ to $t=0$ yields $E^P[\Psi_T(\widehat H)]\geq \Psi_0$. As $P \in \fP$ was arbitrarily chosen, the claim \eqref{eq:pf-thm-claim1-MultiP} is proven. It remains to show that
\begin{equation*}
\Psi_0 \geq \sup_{H \in \cH}\inf_{P \in \fP}E^P[\Psi(H)] 
\end{equation*}
to see that $\widehat{H} \in \cH$ is optimal. So, fix an arbitrary $H \in \cH$. It suffices to show that for every $t \in \{0,\dots,T-1\}$
\begin{equation}\label{eq:pf-thm-claim2-MultiP}
\inf_{P \in \fP}E^P[\Psi_t(H^t)]
\geq
\inf_{P \in \fP}E^P[\Psi_{t+1}(H^{t+1})].
\end{equation}
Indeed, by using the inequality repeatedly from $t=0$ until $t=T-1$  we get that 
 $\Psi_0\geq \inf_{P\in \fP}E^{P}[\Psi_{T}( H)]$.
Furthermore, as $H \in \cH$ was arbitrary and $\Psi_T=\Psi$, we obtain the desired inequality.

Now, to prove the inequality in~\eqref{eq:pf-thm-claim2-MultiP}, fix an $\varepsilon>0$. By \cite[Lemma~5.13]{NeufeldSikic.16}, see also \cite[Proposition~7.50]{BertsekasShreve.78}, there exists a kernel $P^\varepsilon_t:\Omega^t\to\fM_1(\Omega_1)$ such that for all $\omega^t \in \Omega^t$
\begin{align*}
 & E^{P^\varepsilon_t(\omega^t)}[\Psi_{t+1}(\omega^t\otimes_t\,\cdot, H^{t+1}(\omega^t))]-\varepsilon\\[1ex]
&\leq  \ (-\varepsilon^{-1})\vee \inf_{P \in \fP_t(\omega^t)}E^P[\Psi_{t+1}(\omega^t\otimes_t\,\cdot, H^{t+1}(\omega^t))]\\[1ex]
&\leq  \ (-\varepsilon^{-1})\vee \sup_{x \in \R^d}\inf_{P \in \fP_t(\omega^t)}E^P[\Psi_{t+1}(\omega^t\otimes_t\,\cdot, H^t(\omega^{t-1}),x)] 
= \ (-\varepsilon^{-1})\vee \Psi_t(\omega^t,H^t(\omega^{t-1})).
\end{align*}
Take any $P \in \fP$ and denote its restriction to $\Omega^t$ by $P^t$. Integrating the above inequalities yields
\begin{align*}
E^{P^t}[(-\varepsilon^{-1})\vee \Psi_t(H^t)]
\geq
E^{P^t\otimes P^\varepsilon_t}[\Psi_{t+1}( H^{t+1})]-\varepsilon 
\geq
\inf_{P'\in \fP}E^{P'}[\Psi_{t+1}( H^{t+1})]-\varepsilon.
\end{align*}
Letting $\varepsilon \to 0$, we obtain, by Fatou's Lemma, that
\begin{equation*}
E^P[ \Psi_t(H^t)] \geq \inf_{P'\in \fP}E^{P'}[\Psi_{t+1}( H^{t+1})].
\end{equation*}
\end{proof}
%
%

%
%
%

%
%


\begin{thebibliography}{10}
	
	\bibitem{AlmgrenChriss.00}
	R.~Almgren and N.~Chriss.
	\newblock Optimal execution of portfolio transactions.
	\newblock {\em J. Risk}, 3:5--39, 2000.
	

	\bibitem{Bartl.16}
	D.~Bartl.
	\newblock Exponential utility maximization under model uncertainty for
	unbounded endowments.
	\newblock {\em Ann. Appl. Probab.}, 29(1):577--612, 2019.
	
	\bibitem{BayraktarYao.11a}
	E.~Bayraktar and S.~Yao.
	\newblock Optimal stopping for non-linear expectations{---}{P}art~{I}.
	\newblock {\em Stochastic Process. Appl.}, 121(2):185--211, 2011.
	
	\bibitem{BayraktarYao.11b}
	E.~Bayraktar and S.~Yao.
	\newblock Optimal stopping for non-linear expectations{---}{P}art~{II}.
	\newblock {\em Stochastic Process. Appl.}, 121(2):212--264, 2011.
	
	\bibitem{BertsekasShreve.78}
	D.~P. Bertsekas and S.~E. Shreve.
	\newblock {\em Stochastic Optimal Control. The Discrete-Time Case.}
	\newblock Academic Press, New York, 1978.
	
	\bibitem{BiaginiPinar.15}
	S.~Biagini and M.~Pinar.
	\newblock The robust {M}erton problem of an ambiguity averse investor.
	\newblock {\em Math. Finan. Econ.},
	11(1):1--24,
	 2017.
	
	\bibitem{BouchardNutz.13}
	B.~Bouchard and M.~Nutz.
	\newblock Arbitrage and duality in nondominated discrete-time models.
	\newblock {\em Ann. Appl. Probab.}, 25(2):823--859, 2015.
	
	\bibitem{CarassusBlanchard.16}
	L.~Carassus and R.~Blanchard.
	\newblock Multiple-priors optimal investment in discrete time for unbounded utility
	function.
	\newblock {\em Preprint, arXiv:1609.09205v3}, 2016.
	
	\bibitem{CarassusRasonyi.16}
	L.~Carassus and M.~R{\'a}sonyi.
	\newblock Maximization of nonconcave utility functions in discrete-time
	financial market models.
	\newblock {\em Math. Oper. Res.}, 41(1):146--173, 2016.
	
	\bibitem{CarassusRasonyiRodrigues.15}
	L.~Carassus, M.~R{\'a}sonyi, and A.~M. Rodrigues.
	\newblock Non-concave utility maximisation on the positive real axis in
	discrete time.
	\newblock {\em Math. Financ. Econ.}, 9(4):325--349, 2015.
	
	\bibitem{DenisKervarec.13}
	L.~Denis and M.~Kervarec.
	\newblock Optimal investment under model uncertainty in nondominated models.
	\newblock {\em SIAM J. Control Optim.}, 51(3):1803--1822, 2013.
	
	\bibitem{EkrenTouziZhang.12stop}
	I.~Ekren, N.~Touzi, and J.~Zhang.
	\newblock Optimal stopping under nonlinear expectation.
	\newblock {\em Stochastic Process. Appl.}, 124(10):3277--3311, 2014.
	
	\bibitem{Evstigneev.76}
	I.~V. Evstigneev.
	\newblock Measurable selection and dynamic programming.
	\newblock {\em Math. Oper. Res.}, 1(3):267--272, 1976.
	
	\bibitem{FouquePunWong.16}
	J.-P. Fouque, C.~S. Pun, and H.~Y. Wong.
	\newblock Portfolio optimization with ambiguous correlation and stochastic
	volatilities.
	\newblock {\em SIAM J. Control Optim.}, 54(5):2309--2338, 2016.

\bibitem{Kabanov2003}
Yu.~Kabanov, M.~R\'asonyi, and C.~Stricker.
\newblock On the closedness of sums of convex cones in $L^0$ and the robust no-arbitrage property
\newblock {\em Finance. Stoch.}, 7:403--411, 2003.
	
	\bibitem{LinRiedel.14}
	Q.~Lin and F.~Riedel.
	\newblock Optimal consumption and portfolio choice with ambiguity.
	\newblock {\em Preprint arXiv:1401.1639v1}, 2014.
	
	\bibitem{MatoussiPossamaiZhou.12utility}
	A.~Matoussi, D.~Possamai, and C.~Zhou.
	\newblock Robust utility maximization in non-dominated models with {2BSDEs}:
	The uncertain volatility model.
	\newblock {\em Math. Finance}, 25(2):258--287, 2015.
	
	\bibitem{NeufeldNutz.15}
	A.~Neufeld and M.~Nutz.
	\newblock Robust utility maximization with {L\'e}vy processes.
	\newblock {\em Math. Finance},
	28(1):82--105,
	 2018.
	
	\bibitem{NeufeldSikic.16}
	A.~Neufeld and M.~\v{S}iki\'c.
	\newblock Robust utility maximization in discrete-time markets with friction.
	\newblock {\em SIAM J. Control Optim.},
	56(3):1912--1937, 2018.
	
	\bibitem{Nutz.13util}
	M.~Nutz.
	\newblock Utility maximization under model uncertainty in discrete time.
	\newblock {\em Math. Finance}, 26(2):252--268, 2016.
	
	\bibitem{NutzZhang.15}
	M.~Nutz and J.~Zhang.
	\newblock Optimal stopping under adverse nonlinear expectation and related
	games.
	\newblock {\em Ann. Appl. Probab.}, 25(5):2503--2534, 2015.
	


	\bibitem{PennanenPerkkio.12}
T.~Pennanen and A.-P. Perkki{\"o}.
\newblock Stochastic programs without duality gaps.
\newblock {\em Mathematical Programming}, 136(1):91--110, 2012.

	\bibitem{PennanenPerkkioRasonyi.17}
	T.~Pennanen, A.-P. Perkki{\"o}, and M.~R{\'a}sonyi.
	\newblock Existence of solutions in non-convex dynamic programming and optimal
	investment.
	\newblock {\em Math. Financ. Econ.}, 11(2):173--188, 2017.
	
	\bibitem{RasonyiStettner.06}
	M.~R{\'a}sonyi and L.~Stettner.
	\newblock On the existence of optimal portfolios for the utility maximization
	problem in discrete time financial market models.
	\newblock In {\em From stochastic calculus to mathematical finance}, pages
	589--608. Springer, Berlin, 2006.
	
	\bibitem{Reichlin.13}
	C.~Reichlin.
	\newblock Utility maximization with a given pricing measure when the utility is
	not necessarily concave.
	\newblock {\em Math. Financ. Econ.}, 7(4):531--556, 2013.
	
	\bibitem{Reichlin.16}
	C.~Reichlin.
	\newblock Behavioral portfolio selection: asymptotics and stability along a
	sequence of models.
	\newblock {\em Math. Finance}, 26(1):51--85, 2016.
	
	\bibitem{roch2013resilient}
	A.~Roch and H.~M. Soner.
	\newblock Resilient price impact of trading and the cost of illiquidity.
	\newblock {\em International Journal of Theoretical and Applied Finance},
	16(06):1350037, 2013.
	
	\bibitem{RockafellarWets}
	R.~T. Rockafellar and J-B Wets.
	\newblock {\em Variational analysis}, volume 317.
	\newblock Springer Science \& Business Media, 2009.
	
	\bibitem{sikic-nonar}
	M.~\v{S}iki\'c.
	\newblock Financial market models in discrete time beyond the concave case.
	\newblock {\em Preprint arXiv:1512.01758}, 2015.
	
	\bibitem{mete2013utility}
	H.~M. Soner and M.~Vukelja.
	\newblock Utility maximization in an illiquid market.
	\newblock {\em Stochastics}, 85(4):692--706, 2013.
	
	\bibitem{TevzadzeToronjadzeUzunashvili.13}
	R.~Tevzadze, T.~Toronjadze, and T.~Uzunashvili.
	\newblock Robust utility maximization for a diffusion market model with
	misspecified coefficients.
	\newblock {\em Finance. Stoch.}, 17(3):535--563, 2013.
	
	
\end{thebibliography}
\newcommand{\dummy}[1]{}


\end{document}